\newtheorem{theorem}{Theorem}
\newtheorem{lemma}[theorem]{Lemma}
\newtheorem{question}[theorem]{Question}
\newtheorem{lettertheorem}{Theorem}
\newtheorem{letterlemma}[lettertheorem]{Lemma}
\theoremstyle{definition}
\theoremstyle{remark}
\numberwithin{equation}{section}
\newcommand{\D}{\mathbb{D}}
\newcommand{\DD}{\widehat{\mathcal{D}}}
\newcommand{\Dd}{\widecheck{\mathcal{D}}}
\newcommand{\DDD}{\mathcal{D}}
\newcommand{\N}{\mathbb{N}}
\newcommand{\Z}{\mathbb{Z}}
\newcommand{\C}{\mathbb{C}}
\renewcommand{\phi}{\varphi}
\newcommand{\rad}{{\rm rad}}
\def\a{\alpha}               
     \def\om{\omega}      
       \def\t{\theta}       
         \def\r{\rho}         \def\z{\zeta}
\def\omgu{\widehat{\omega_1}}
\def\nugu{\widehat{\nu_1}}
\def\sgu{\widehat{\sigma_1}}
\renewcommand{\H}{\mathcal{H}}
\newenvironment{Prf}{\noindent{\emph{Proof of}}}
{\hfill$\Box$ }
\begin{document}

\title[One weight inequality for Bergman projection and Calder\'on operator]{One weight inequality for Bergman projection and Calder\'on operator induced by radial weight}

\date{\today}

\begin{abstract}
Let $\omega$ and $\nu$ be radial weights on the unit disc of the complex plane such that $\omega$ admits the doubling property $\sup_{0\le r<1}\frac{\int_r^1 \omega(s)\,ds}{\int_{\frac{1+r}{2}}^1 \omega(s)\,ds}<\infty$. Consider the one weight inequality
	\begin{equation}\label{ab1}
  \|P_\omega(f)\|_{L^p_\nu}\le C\|f\|_{L^p_\nu},\quad 1<p<\infty,\tag{\dag}
  \end{equation}
for the Bergman projection $P_\omega$ induced by $\om$. It is shown that the Muckenhoupt-type condition 
	$$
	A_p(\omega,\nu)=\sup_{0\le r<1}\frac{\left(\int_r^1 s\nu(s)\,ds \right)^{\frac{1}{p}}\left(\int_r^1 s\left(\frac{\omega(s)}{\nu(s)^{\frac1p}}\right)^{p'}\,ds \right)^{\frac{1}{p'}}}{\int_r^1 s\omega(s)\,ds}<\infty,
	$$
is necessary for \eqref{ab1} to hold, and sufficient if $\nu$ is of the form $\nu(s)=\omega(s)\left(\int_r^1 s\omega(s)\,ds \right)^\alpha$ for some $-1<\alpha<\infty$. This result extends the classical theorem due to Forelli and Rudin for a much larger class of weights. In addition, it is shown that for any pair $(\omega,\nu)$ of radial weights the Calder\'on operator 
	$$ 
	H^\star_\omega(f)(z)+H_\omega(f)(z)
	=\int_{0}^{|z|} f\left(s\frac{z}{|z|}\right)\frac{s\omega(s)\,ds}{\int_s^1 t\omega(t)\,dt}
	+\frac{\int_{|z|}^1f\left(s\frac{z}{|z|}\right) s\omega(s)\,ds}{\int_{|z|}^1 s\omega(s)\,ds}\,ds
	$$ 
is bounded on $L^p_\nu$ if and only if $A_p(\omega,\nu)<\infty$.
\end{abstract}

\keywords{Bergman space, Bergman projection, Calder\'on operator, Stieltjes transform, radial doubling weight.}

\author{Francisco J. Mart\'{\i}n Reyes}
\address{Departamento de An\'alisis Matem\'atico, Universidad de M\'alaga, Campus de
Teatinos, 29071 M\'alaga, Spain}
 \email{martin\_reyes@uma.es}

\author{Pedro Ortega}
\address{Departamento de An\'alisis Matem\'atico, Universidad de M\'alaga, Campus de
Teatinos, 29071 M\'alaga, Spain} \email{portega@uma.es}

\author{Jos\'e \'Angel Pel\'aez}
\address{Departamento de An\'alisis Matem\'atico, Universidad de M\'alaga, Campus de
Teatinos, 29071 M\'alaga, Spain} \email{japelaez@uma.es}

\author{Jouni R\"atty\"a}
\address{University of Eastern Finland, P.O.Box 111, 80101 Joensuu, Finland}
\email{jouni.rattya@uef.fi}

\thanks{The first three authors are supported in part by Ministerio de Econom\'{\i}a y Competitividad, Spain, projects
PGC2018-096166-B-100; Junta de Andaluc{\'i}a, projects FQM210, FQM354 and UMA18-FEDERJA-002.}

\subjclass[2010]{30H20, 47B34, 42A38}

\maketitle

\section{Introduction and main results}

Let $\H(\D)$ denote the space of analytic functions in the unit disc $\D=\{z\in\C:|z|<1\}$. For a nonnegative function $\om\in L^1([0,1))$, its extension to $\D$, defined by 
$\om(z)=\om(|z|)$ for all $z\in\D$, is called a radial weight. For $0<p<\infty$ and such an $\omega$, the Lebesgue space $L^p_\om$ consists of complex-valued measurable functions $f$ on $\D$ such that
    $$
    \|f\|_{L^p_\omega}^p=\int_\D|f(z)|^p\omega(z)\,dA(z)<\infty,
    $$
where $dA(z)=\frac{dx\,dy}{\pi}$ is the normalized Lebesgue area measure on $\D$. The corresponding weighted Bergman space is $A^p_\om=L^p_\omega\cap\H(\D)$. Throughout this paper we assume $\widehat{\om}(z)=\int_{|z|}^1\om(s)\,ds>0$ for all $z\in\D$, for otherwise $A^p_\om=\H(\D)$. As usual, we write~$A^p_\alpha$ for the classical weighted Bergman space induced by the standard radial weight $(\alpha+1)(1-|z|^2)^\alpha$, where $-1<\alpha<\infty$.

The Bergman Hilbert space $A^2_\om$ is a closed subspace of $L^2_\om$, and the orthogonal Bergman projection from $L^2_\om$ to $A^2_\om$ is the integral operator
  \begin{equation*}\label{intoper}
    P_\om(f)(z)=\int_{\D}f(\z)\overline{B^\om_{z}(\z)}\,\om(\z)dA(\z),\quad z\in\D,
    \end{equation*}
where $\{B^\om_{z}\}_{z\in\D}$ are the reproducing kernels of $A^2_\om$. The kernel of the classical weighted Bergman space $A^2_\alpha$ is denoted by $B^\alpha_z$, and $P_\alpha$ stands for the corresponding Bergman projection. 

This paper concerns the question of when, for a given $1<p<\infty$, we have 
		\begin{equation}\label{twoweight}
    \|P_\om(f)\|_{L^p_\nu}\le C\|f\|_{L^p_\nu},\quad f\in L^p_\nu.
    \end{equation}
We also consider the one-weight inequality analogous to \eqref{twoweight} for the maximal Bergman projection
	\begin{equation*}
	P^+_\om(f)(z)=\int_{\D}f(\z)|B^\om_{z}(\z)|\om(\z)\,dA(\z),\quad z\in\D.
  \end{equation*}
The boundedness of projections on $L^p$-spaces is a compelling topic which has attracted a lot of attention during the last decades. Regarding the inequality \eqref{twoweight}, we refer to \cite{AlPoRe,B1981,BB,PelRatKernels,PRW,PelaezRattya2019}. The most known result concerning \eqref{twoweight} is due to Bekoll\'e and Bonami~\cite{B1981,BB}, and it concerns the case when~$\nu$ is an arbitrary weight and the inducing weight $\om$ is standard. See \cite{ACJFA12,PRW,PottRegueraJFA13} for recent extensions of this result. Some of the key tools in 
the proof of the aforementioned result of Bekoll\'e and Bonami are based on the classical harmonic analysis, and rely strongly on the nice properties that the standard kernel has. In particular, the neat explicit formula $B^\alpha_z(\z)=(1-\overline{z}\z)^{-(2+\alpha)}$ shows that each $B^\alpha_z$ is zero-free and its modulus is essentially constant in hyperbolically bounded regions.
However, these properties are no longer true for the Bergman reproducing kernel induced by an arbitrary radial weight. Namely, for a given radial weight $\om$ there exists another radial weight $\nu$ such that $A^2_\om=A^2_\nu$, but $B^\nu_z$ have zeros, see the proof of \text\\ 
\cite[Theorem~2]{BoudreauxJGA19}. Going further, one of the main obstacles to tackle the inequality~\eqref{twoweight} is the lack of explicit expressions for $B^\om_z$. By and large, this kernel has the representation $B^\om_z(\z)=\sum \overline{e_n(z)}e_n(\z)$ for each orthonormal basis $\{e_n\}$ of $A^2_\om$. Therefore the normalized monomials show that the properties of the weight $\om$ are transmitted to the projection $P_\om$ through the representation $B^\om_z(\z)=\sum_{n=0}^\infty\frac{\left(\overline{z}\z\right)^n}{2\om_{2n+1}}$ of the kernel via its odd moments $\om_{2n+1}$. In general, from now on we write $\om_x=\int_0^1r^x\om(r)\,dr$ for all $x\ge0$. 
	 
To the best of our knowledge, describing the radial weights $\om$ such that $P_\om$ is bounded on~$L^p_\om$ is an open problem that was formally posed by Dostani\'c in \cite{Dostanic} almost two decades ago. Even though this particular case $\om=\nu$ of \eqref{twoweight} remains unsettled, it was recently proved in \cite[Theorem~7]{PelaezRattya2019} that $P_\om:\,L^p_\om\to L^p_\om$, $1<p<\infty$, is bounded if $\om$ belongs to $\widehat{\mathcal{D}}$. The class $\widehat{\mathcal{D}}$ consists of the radial weights $\om$ having the doubling property $\sup_{0\le r<1}\frac{\widehat{\omega}(r)}{\widehat{\om}\left(\frac{1+r}{2}\right)}<\infty$. Further, mild extra regularity hypothesis on $\om$ guarantee the necessity of the doubling condition $\om\in\DD$ for $P_\om:\,L^p_\om\to L^p_\om$ to be bounded~\cite[Corollary~8]{PelaezRattya2019}. Due to these results we will study the inequality \eqref{twoweight} under the initial hypothesis $\om\in\DD$. Another class of doubling weights that naturally appears in our study is denoted by $\Dd$, and it consists of radial weights $\om$ for which there exist constants $K=K(\om)>1$ and $C=C(\om)>1$ such that $\widehat{\om}(r)\ge C\widehat{\om}\left(1-\frac{1-r}{K}\right)$ for all $0\le r<1$. The intersection $\DD\cap\Dd$ of these classes of weights is denoted by $\mathcal{D}$. The next few lines are dedicated to offer a brief insight to these classes. Each standard radial weight $(\alpha+1)(1-|z|^2)^\alpha$ belongs $\DDD$, while $\Dd\setminus\DDD$ contains weights that tend to zero exponentially. The class of rapidly increasing weights, introduced in \cite{PelRat}, lies entirely within $\DD\setminus\DDD$, and a typical example of such a weight is $\nu(z)=(1-|z|^2)^{-1}\left(\log\frac{e}{1-|z|^2}\right)^{-\alpha}$, where $\alpha>1$. To this end we emphasize that the containment in $\DD$ or $\Dd$ does not require continuity neither positivity. In fact, weights in these classes may vanish on a relatively large part of each outer annulus $\{z:r\le|z|<1\}$ of $\D$. For basic properties of aforementioned classes, concrete nontrivial examples and more, see \cite{PelSum14,PelRat,PelaezRattya2019} and the relevant references therein.

The recent papers \cite{KPR2,PGR} show that the (radial) averaging operator  
		$$
		H_\om(f)(z)=\frac{\int_{|z|}^1f\left(s\frac{z}{|z|}\right)\om_1(s)\,ds}{\widehat{\om_1}(|z|)}
		,\quad z\in\D\setminus\{0\},\quad \om_1(s)=s\om(s),  
    $$
can be considered as a toy model to study  the pairs of  radial weights $(\om,\nu)$  such that \eqref{twoweight} holds. In order to make this statement precise, for $1<p<\infty$ and radial weights $\om$ and $\nu$, write $\sigma(r)=\sigma_{p,\om,\nu}(r)=\left(\frac{\om(r)}{\nu(r)^{\frac1p}}\right)^{p'}$, and define
\begin{equation}\label{eq:Mpcondition}
    M_p(\om,\nu)
		=\sup_{0<r<1}\left(\int_0^r\frac{\nu_1(s)}{\omgu(s)^p}\,ds+1\right)^\frac1p\widehat{\sigma_1}(r)^{\frac{1}{p'}}.
    \end{equation}
With this notation the classical result of Muckenhoupt \cite[Theorem~2]{Muckenhoupt1972} states that $H_\om:L^p_\nu\to L^p_\nu$ is bounded if and only if $M_p(\om,\nu)<\infty$. This together with minor modifications
in the proof of \cite[Theorem~13]{PelaezRattya2019} yields the following result, which is a substantial improvement of \cite[Corollary~1.2]{KPR2}. 

\begin{lettertheorem}\label{th:A}
\label{Theorem:P_w-L^p_v}
Let $1<p<\infty$ and $\om\in\DD$, and let $\nu$ be a radial weight. Then the following statements are equivalent:
\begin{enumerate}
    \item[\rm(i)] $P^+_\om:L^p_\nu\to L^p_\nu$ is bounded;
    \item[\rm(ii)] $P_\om:L^p_\nu\to L^p_\nu$ is bounded and $\nu\in\Dd$;
    \item[\rm(iii)] $H_\om:L^p_\nu\to L^p_\nu$ is bounded and  $\om,\nu\in\DDD$;
    \item[\rm(iv)] $M_p(\om,\nu)<\infty$ and $\om,\nu\in\DDD$.
\end{enumerate}
\end{lettertheorem}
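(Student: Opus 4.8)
The plan is to take the equivalence (iii)$\Leftrightarrow$(iv) as essentially automatic and to route the remaining implications through the averaging operator $H_\om$, using a pointwise comparison of $P^+_\om$ with the Calder\'on operator $H_\om+H^\star_\om$. First I would observe that (iii)$\Leftrightarrow$(iv) is nothing but Muckenhoupt's theorem \cite[Theorem~2]{Muckenhoupt1972} applied to $H_\om$: the characterization that $H_\om:L^p_\nu\to L^p_\nu$ is bounded if and only if $M_p(\om,\nu)<\infty$ holds verbatim, and the side hypotheses $\om,\nu\in\DDD$ are identical in both statements, so they may simply be appended. This disposes of half the work.

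For (i)$\Rightarrow$(ii), the pointwise inequality $|P_\om(f)(z)|\le P^+_\om(|f|)(z)$ shows that boundedness of $P^+_\om$ on $L^p_\nu$ forces boundedness of $P_\om$. To extract $\nu\in\Dd$ I would test the inequality in (i) against suitable functions, such as indicator functions of outer annuli $\{\z:\,s\le|\z|<1\}$ or normalized kernels, and use lower bounds for $|B^\om_z(\z)|$ available for $\om\in\DD$, which come from the moment representation $B^\om_z(\z)=\sum_n(\overline z\z)^n/(2\om_{2n+1})$ combined with the doubling of $\om$. Evaluating both sides then forces the reverse growth $\widehat{\om}$-type condition $\widehat\nu(r)\ge C\widehat\nu(1-(1-r)/K)$ that defines $\Dd$.

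The technical heart is the comparison of $P^+_\om$ with the Calder\'on operator. Splitting the integral defining $P^+_\om(|f|)(z)$ into the regions $|\z|<|z|$ and $|\z|\ge|z|$, and invoking sharp two-sided estimates for $|B^\om_z(\z)|$ valid under $\om\in\DD$, one obtains $P^+_\om(|f|)(z)\asymp H_\om(|f|)(z)+H^\star_\om(|f|)(z)$ up to bounded error terms; hence $P^+_\om$ is bounded on $L^p_\nu$ precisely when $H_\om+H^\star_\om$ is. It then remains to show, under the running hypotheses, that boundedness of the Calder\'on operator is equivalent to boundedness of $H_\om$ alone together with $\om,\nu\in\DDD$. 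Here the inner part $H^\star_\om$ is governed by the dual Muckenhoupt condition, which, by the symmetry of the quantity $A_p(\om,\nu)$ under the interchange $p\leftrightarrow p'$ of weights and exponents, coincides with finiteness of $M_p(\om,\nu)$ once the doubling properties are in force; the same test-function arguments promote $\nu$ to $\DD$ and $\om$ to $\Dd$, yielding $\om,\nu\in\DDD$ and closing the cycle.

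I expect the main obstacle to be exactly the kernel estimate underlying the comparison $P^+_\om\asymp H_\om+H^\star_\om$: unlike the standard weight, $B^\om_z$ has no explicit formula, is not zero-free, and need not be essentially constant on hyperbolic balls, so two-sided control of $|B^\om_z(\z)|$ must be squeezed out of the doubling hypothesis $\om\in\DD$ through the odd moments $\om_{2n+1}$. A secondary difficulty is the bookkeeping that makes the single scalar $M_p(\om,\nu)$ suffice, i.e. the equivalence of the forward and backward Muckenhoupt conditions; this is where the upgrade from $\om\in\DD$ to $\om\in\DDD$ (and $\nu\in\DDD$) is genuinely used, and it is the point at which the minor modifications of the proof of \cite[Theorem~13]{PelaezRattya2019} enter.
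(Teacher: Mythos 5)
Your disposal of (iii)$\Leftrightarrow$(iv) via Muckenhoupt's theorem \cite{Muckenhoupt1972} is exactly what the paper does (for the rest it defers to ``minor modifications'' of the proof of \cite[Theorem~13]{PelaezRattya2019}), but the remainder of your plan hinges on a step that fails: the two-sided comparison $P^+_\om(|f|)\asymp H_\om(|f|)+H^\star_\om(|f|)$ under $\om\in\DD$ alone. First, there are no pointwise two-sided estimates for $|B^\om_z(\z)|$ to be ``squeezed out'' of $\om\in\DD$: the kernel may vanish (see \cite{BoudreauxJGA19}, cited in the introduction), and what doubling actually yields is a two-sided estimate for the circle means, roughly $\int_0^{2\pi}|B^\om_z(se^{i\t})|\,d\t\asymp1+\int_0^{s|z|}\frac{dt}{\widehat{\om}(t)(1-t)}$. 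Second, and decisively, even this integrated estimate produces the Calder\'on kernel $s\om(s)/\widehat{\om_1}(\max(s,|z|))$ only after replacing $\int_0^r\frac{dt}{\widehat{\om}(t)(1-t)}$ by $1/\widehat{\om}(r)$, and that replacement is legitimate precisely when $\om\in\Dd$. Outside $\DDD$ the operator equivalence is genuinely false: take $\om\in\DD\setminus\DDD$ and $\nu=\om\widehat{\om_1}^{\a}$ with $-1<\a<\frac{p}{p'}$; then $A_p(\om,\nu)<\infty$, so $H_\om+H^\star_\om$ is bounded on $L^p_\nu$ by Theorem~\ref{th:calderon}, while $P^+_\om$ is unbounded by the second part of Theorem~\ref{pr:potencias} (equivalently, by statement (iv) itself, since $\om\notin\DDD$). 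So $P^+_\om$ is strictly stronger than the Calder\'on operator for non-$\Dd$ weights, and the comparison may only be used \emph{after} $\om\in\DDD$ has been secured---whereas your plan invokes it in order to establish $\om\in\DDD$.

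A second, independent gap is that your cycle never closes: you prove (i)$\Rightarrow$(ii) and propose (i)$\Leftrightarrow$(iii)$\Leftrightarrow$(iv), but nothing is ever deduced \emph{from} (ii), and (ii)$\Rightarrow$(iv) is the deepest implication of the theorem. There the hypothesis is boundedness of the cancellative operator $P_\om$, which the domination $|P_\om(f)|\le P^+_\om(|f|)$ cannot touch; one must run the adjoint--monomial necessity argument of Lemma~\ref{le:nec} (namely $P^\star_\om(m_n)=\frac{\om\,\nu_{2n+1}}{\nu\,\om_{2n+1}}\,m_n$, the moment asymptotics $\om_{2n+1}\asymp\widehat{\om_1}\left(1-\frac1n\right)$, and $\nu_1\in\DD$ from \cite[Proposition~19]{PelaezRattya2019}) to obtain $A_p(\om,\nu)<\infty$, extract $\om\in\Dd$, and then prove sufficiency for $P^+_\om$ using $\nu\in\Dd$; this is precisely the content of \cite[Theorem~13]{PelaezRattya2019} that the paper adapts. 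Relatedly, your identification of the dual Muckenhoupt condition with $M_p(\om,\nu)<\infty$ ``once the doubling properties are in force'' is asserted rather than proved, and the order of quantifiers matters: the paper's example $\om\equiv1$, $\nu(s)=(1-s^2)^{-1}\left(\log\frac{e}{1-s^2}\right)^{-\a}$ has $M_p(\om,\nu)<\infty$ but $A_p(\om,\nu)=\infty$, so the equivalence of the two scalars is itself a doubling-dependent lemma that must come after, not alongside, the doubling upgrades.
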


This result characterizes the one-weight inequality \eqref{twoweight} whenever  $\om\in\DD$ and $\nu\in\Dd$, and establishes clear connections between the projections $P_\om$ and $P^+_\om$, and the averaging operator~$H_\om$. What remains unsettled is \eqref{twoweight} when $\om\in\DD$ but the hypothesis $\nu\in\Dd$ is removed. In order to approach this question, we write 
		\begin{equation}\label{eq:apcondition}
    A_p(\om,\nu)=\sup_{0\le r<1}\frac{\nugu(r)^{\frac{1}{p}}\sgu(r)^{\frac{1}{p'}}}{\omgu(r)}.
    \end{equation}
It is known by \cite[Theorem~13]{PelaezRattya2019} that also the condition $A_p(\om,\nu)<\infty$ characterizes the bounded Bergman projections $P_\om:L^p_\nu\to L^p_\nu$, provided $\om\in\DD$ and $\nu\in\Dd$. Further, this condition implies $M_p(\om,\nu)<\infty$ for any pair of radial weights $(\om,\nu)$, see the proof of \cite[Theorem~13]{PelaezRattya2019} or Theorem~\ref{th:calderon} below. Going further, we will prove the following result.

\begin{letterlemma}\label{le:nec}
Let $1<p<\infty$ and $\om\in\DD$, and let $\nu$ be a radial weight. If $P_\om: L^p_\nu\to L^p_\nu$ is bounded, then $A_p(\om,\nu)<\infty$.
\end{letterlemma}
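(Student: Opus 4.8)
The plan is to prove the necessity directly by dualizing the hypothesis, testing the resulting inequality on monomials modulated by radial functions, and finally converting the arising moment inequality into the integral condition $A_p(\om,\nu)<\infty$ by exploiting $\om\in\DD$. First I would pair $P_\om f$ against $\overline{g}\,\nu$ and use Fubini together with the expansion $B^\om_z(\z)=\sum_{n\ge0}(\overline{z}\z)^n/(2\om_{2n+1})$ from the excerpt to obtain, for $f\in L^p_\nu$ and suitable $g$, the identity
\begin{equation*}
\int_\D P_\om f(z)\,\overline{g(z)}\,\nu(z)\,dA(z)=\int_\D f(\z)\,\om(\z)\,\overline{Qg(\z)}\,dA(\z),\qquad Qg(\z)=\sum_{n=0}^\infty\frac{\z^n}{2\om_{2n+1}}\int_\D\overline{z}^{\,n}g(z)\nu(z)\,dA(z).
\end{equation*}
Taking the supremum over $f$ with $\|f\|_{L^p_\nu}\le1$, writing $\om/\nu^{1/p}=\sigma^{1/p'}$, and invoking the boundedness hypothesis then yields the dual bound $\|Qg\|_{L^{p'}_\sigma}\le C\|g\|_{L^{p'}_\nu}$.

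Next I would test this on $g_N(z)=z^N h(|z|)$ with $h\ge0$ radial and $N\in\N$, for which all but the $n=N$ term of $Qg_N$ vanish, so that $Qg_N(\z)$ is a constant multiple of $\z^N$. Computing both norms in polar coordinates reduces the dual bound to
\begin{equation*}
\frac{\left(\int_0^1 s^{2N+1}h(s)\nu(s)\,ds\right)^{p'}}{\om_{2N+1}^{p'}}\int_0^1 s^{Np'+1}\sigma(s)\,ds\ \lesssim\ \int_0^1 s^{Np'+1}h(s)^{p'}\nu(s)\,ds .
\end{equation*}
Optimizing over $h$ by H\"older's inequality (with equality at $h(s)\propto s^{N(p-p')/p'}$) collapses the left integral against $\nu$ and leaves the moment inequality
\begin{equation*}
\frac{\nu_{Np+1}^{1/p}\,\sigma_{Np'+1}^{1/p'}}{\om_{2N+1}}\le C,\qquad N=0,1,2,\dots,
\end{equation*}
where $\nu_{Np+1}=\int_0^1 s^{Np+1}\nu\,ds$, $\sigma_{Np'+1}=\int_0^1 s^{Np'+1}\sigma\,ds$ and $\om_{2N+1}=\int_0^1 s^{2N+1}\om\,ds$.

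Finally, for fixed $r\in[0,1)$ I would choose $N=N(r)\asymp(1-r)^{-1}$. The elementary one-sided bound $\int_0^1 s^{x+1}u\,ds\ge r^{x}\int_r^1 s u\,ds$, valid for every weight $u$ and requiring no doubling, applied to $\nu$ and $\sigma$ with this $N$ (so that $r^{Np},r^{Np'}\gtrsim1$) bounds the numerator below by $\nugu(r)^{1/p}\sgu(r)^{1/p'}$. For the denominator I would use that $\om\in\DD$ provides the two-sided moment--tail comparison $\om_{2N+1}\asymp\omgu(1-1/(2N))\asymp\omgu(r)$, the last step being a bounded number of doubling steps. Combining these estimates gives $\nugu(r)^{1/p}\sgu(r)^{1/p'}/\omgu(r)\lesssim C$ uniformly in $r$, which is exactly $A_p(\om,\nu)<\infty$.

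The crux, and the step I expect to demand the most care, is the index mismatch among $Np$, $Np'$ and $2N$ forced by testing on a single power $z^N$: the three tails are a priori evaluated at the three distinct radii $1-1/(Np)$, $1-1/(Np')$ and $1-1/(2N)$. For $\nu$ and $\sigma$ only the one-sided elementary bound together with monotonicity of the tails is available, consistent with the fact that $\nu$ is not assumed doubling, whereas the radius in the $\om$-term is realigned using $\om\in\DD$. Verifying that all these shifts cost only bounded constants, uniformly in $r$ and for every $1<p<\infty$, is the technical heart of the argument.
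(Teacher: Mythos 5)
Your argument is correct and it reaches the condition $A_p(\om,\nu)<\infty$ by a genuinely different route at the decisive step. The paper computes the adjoint with respect to the $L^2_\nu$-pairing on bare monomials: $P^\star_\om(m_n)=\frac{\om\,\nu_{2n+1}}{\nu\,\om_{2n+1}}m_n$ (your $Qg$ is exactly the paper's $\frac{\nu}{\om}L_\om(g)$), and testing $\|P^\star_\om(m_n)\|_{L^{p'}_\nu}\lesssim\|m_n\|_{L^{p'}_\nu}$ then produces the mismatched moments $\nu_{np'+1}$ and $\nu_{2n+1}$, which the paper reconciles only by importing \cite[Proposition~19(i)]{PelaezRattya2019} (boundedness of $P_\om$ with $\om\in\DD$ forces $\nu_1\in\DD$) and then applying \cite[Lemma~2.1]{PelSum14} to $\nu_1$. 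Your extra radial factor $h$ together with the equality case of H\"older's inequality performs this realignment intrinsically: with $h(s)=s^{N(p-p')/p'}$ both sides collapse to $\nu_{Np+1}$ (indeed $2N+1+N(p-p')/p'=Np+1$ because $p+p'=pp'$), so no doubling on $\nu$ is ever invoked --- consistent with the statement --- and $\om\in\DD$ enters exactly as in the paper, through $\om_{2N+1}\asymp\omgu\left(1-\frac{1}{2N}\right)\asymp\omgu(r)$ plus the elementary one-sided tail bounds for $\nu$ and $\sigma$. This makes your proof more self-contained, removing the one nontrivial external ingredient of the paper's argument. The place to tighten is the passage you dismiss with ``use Fubini'' and ``taking the supremum over $f$'': the paper spends half of its proof establishing that $L^p_\nu\subset L^1_\om$ continuously (a gliding-hump contradiction), that $\om$ is absolutely continuous with respect to $\nu$, and that $\int_0^1 s\sigma(s)\,ds<\infty$ (its \eqref{eq:ini}), precisely so that the adjoint computation and the dual-norm identification make sense. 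In your scheme these facts are recoverable rather than presupposed --- justify Fubini first for bounded $f$ and $h$ supported in $[0,R]$, $R<1$, then remove the truncations by monotone convergence; finiteness of $\sup_{\|f\|_{L^p_\nu}\le1}\bigl|\int_\D f\,\om\,\overline{Qg_N}\,dA\bigr|$ forces $\om\,Qg_N=0$ a.e.\ on $\{\nu=0\}$, ruling out $\sigma=\infty$ on a set of positive measure; and your $N=0$ case yields $\int_0^1 s\sigma\,ds<\infty$ --- but these verifications must be written out. Note finally that the ``index mismatch'' you flag as the technical heart is already dissolved by your own H\"older step for $\nu$ and $\sigma$ and costs a single doubling step for $\om$; the genuine technical heart is the measure-theoretic justification just described.
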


This lemma is essentially contained in the proof of \cite[Theorem~13]{PelaezRattya2019}, but we provide the details for the convenience of the reader and the sake of completeness. 

To get the big picture right, let us now consider the radial weights $\omega\equiv1$ and $\nu(s)=(1-s^2)^{-1}\left( \log\frac{e}{1-s^2}\right)^{-\alpha}$, where $\alpha>1$. Obviously, $\om\in\DD$, and straightforward calculations show that $\nu\notin\Dd$, $M_p(\om,\nu)<\infty$ but $A_p(\om,\nu)=\infty$. Therefore $P_\om$ is not bounded on $L^p_\nu$ for any $1<p<\infty$ by Lemma~\ref{le:nec}. This forces us to focus on the condition $A_p(\om,\nu)<\infty$ and rules out $M_p(\om,\nu)<\infty$ as a candidate to describe the couples of radial weights $(\om,\nu)$ such that \eqref{twoweight} holds under the initial hypothesis $\om\in\DD$. Unfortunately, we can not judge whether or not the reverse statement to Lemma~\ref{le:nec} is true. This is, in our opinion, an intriguing matter and deserves to be stated as an open question.

\begin{question}\label{q2}
Let $1<p<\infty$ and $\om\in\DD$, and let $\nu$ be a radial weight such that $A_p(\om,\nu)<\infty$. Is then $P_\om$ bounded on $L^p_\nu$?
\end{question}

Despite the fact that we are not able to answer Question~\ref{q2} in its full generality, we provide an affirmative answer in an important particular case.
 
\begin{theorem}\label{pr:potencias}
Let $1<p<\infty$, $-1<\alpha<\infty$, $\om\in\DD$ and $\nu=\om\widehat{\om_1}^\alpha$. Then $P_\om:L^p_\nu\to L^p_\nu$ is bounded if and only if $A_p(\om,\nu)<\infty$. Moreover, $P^+_\om:L^p_\nu\to L^p_\nu$ is bounded if and only if $A_p(\om,\nu)<\infty$ and $\om\in\DDD$. 
\end{theorem}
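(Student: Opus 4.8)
I would first dispose of the necessity statements. That $P_\om:L^p_\nu\to L^p_\nu$ bounded implies $A_p(\om,\nu)<\infty$ is exactly Lemma~\ref{le:nec}. For the maximal projection, boundedness of $P^+_\om$ forces boundedness of $P_\om$ (since $|P_\om f|\le P^+_\om(|f|)$ pointwise), hence $A_p(\om,\nu)<\infty$; and the implication (i)$\Rightarrow$(iv) of Theorem~\ref{th:A} gives $\om\in\DDD$. So only the two sufficiency claims require work.

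The engine of the proof is an explicit computation exploiting $\nu=\om\widehat{\om_1}^\alpha$. Using $-\frac{d}{ds}\widehat{\om_1}(s)=s\om(s)$ and $\widehat{\om_1}(1)=0$, I would compute, for $\alpha>-1$,
\begin{equation*}
\widehat{\nu_1}(r)=\int_r^1 s\om(s)\,\widehat{\om_1}(s)^\alpha\,ds=\frac{\widehat{\om_1}(r)^{\alpha+1}}{\alpha+1},
\end{equation*}
and, noting that $\sigma=\om\widehat{\om_1}^{\beta}$ with $\beta=-\alpha/(p-1)$, likewise $\widehat{\sigma_1}(r)=\frac{\widehat{\om_1}(r)^{\beta+1}}{\beta+1}$ when $\beta>-1$ while $\widehat{\sigma_1}\equiv\infty$ otherwise. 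Because $\beta>-1$ is equivalent to $\alpha<p-1$ and $\frac{\alpha+1}{p}+\frac{\beta+1}{p'}=1$, it follows that $A_p(\om,\nu)$ is the finite constant $(\alpha+1)^{-1/p}(\beta+1)^{-1/p'}$ when $-1<\alpha<p-1$ and $A_p(\om,\nu)=\infty$ otherwise; in particular $A_p(\om,\nu)<\infty\Leftrightarrow\alpha<p-1$. I would also record two structural consequences of $\om\in\DD$: since then $\widehat{\om_1}$ is doubling and $\alpha+1>0$, the identity above shows $\widehat{\nu_1}\asymp\widehat{\om_1}^{\alpha+1}$ is doubling and $\widehat{\nu}\asymp\widehat{\nu_1}$ near the boundary, so $\nu\in\DD$; and since raising to the positive power $\alpha+1$ preserves and reflects the lower doubling condition, $\nu\in\Dd$ if and only if $\om\in\Dd$.

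For the sufficiency of $A_p(\om,\nu)<\infty$ for $P_\om$, assume $-1<\alpha<p-1$. When $\om\in\Dd$ the second paragraph gives $\nu\in\Dd$, and the conclusion is precisely \cite[Theorem~13]{PelaezRattya2019}. The genuine case is $\om\in\DD\setminus\Dd$: then $\nu\notin\Dd$, the maximal projection $P^+_\om$ is unbounded on $L^p_\nu$ by Theorem~\ref{th:A}, and cancellation in the analytic projection is unavoidable. The plan is to dominate $P_\om$ by the Calder\'on operator: following the scheme behind \cite[Theorems~7 and~13]{PelaezRattya2019}, I would combine the kernel estimates available for $\om\in\DD$ with the fact $\nu\in\DD$ to bound $\|P_\om f\|_{L^p_\nu}$ by $\|(H^\star_\om+H_\om)(g)\|_{L^p_\nu}$ for some $g$ with $\|g\|_{L^p_\nu}\lesssim\|f\|_{L^p_\nu}$, and then apply Theorem~\ref{th:calderon}, which turns $A_p(\om,\nu)<\infty$ into boundedness of the Calder\'on operator on $L^p_\nu$. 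The hard part will be exactly this domination: converting the oscillatory two-dimensional kernel $B^\om_z$ into the positive one-dimensional Calder\'on kernel spends the cancellation that $P^+_\om$ lacks, and carrying it out under the sole hypothesis $\nu\in\DD$ is what restricts the argument to weights of the form $\om\widehat{\om_1}^\alpha$ and leaves Question~\ref{q2} open in general.

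Finally, the maximal-projection equivalence follows by assembling the pieces. By (i)$\Leftrightarrow$(iv) of Theorem~\ref{th:A}, $P^+_\om$ is bounded on $L^p_\nu$ if and only if $M_p(\om,\nu)<\infty$ and $\om,\nu\in\DDD$; since $\nu\in\DD$ always and $\nu\in\Dd\Leftrightarrow\om\in\Dd$, the condition $\om,\nu\in\DDD$ collapses to $\om\in\DDD$. It then remains to check that, under $\om\in\DDD$ (hence $\nu\in\Dd$), the two Muckenhoupt-type conditions agree: $A_p(\om,\nu)<\infty\Rightarrow M_p(\om,\nu)<\infty$ holds for every radial pair, while the converse follows from \cite[Theorem~13]{PelaezRattya2019} (giving $P_\om$ bounded $\Leftrightarrow A_p<\infty$ when $\nu\in\Dd$) together with the equivalence (ii)$\Leftrightarrow$(iv) of Theorem~\ref{th:A} (giving $P_\om$ bounded $\Leftrightarrow M_p<\infty$ under $\om,\nu\in\DDD$). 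This yields $P^+_\om$ bounded $\Leftrightarrow$ $A_p(\om,\nu)<\infty$ and $\om\in\DDD$, as claimed.
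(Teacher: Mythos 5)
Your necessity arguments, the explicit computation showing $A_p(\om,\nu)<\infty\Leftrightarrow-1<\alpha<p-1$ via $\widehat{\nu_1}=\widehat{\om_1}^{\alpha+1}/(\alpha+1)$ and $\widehat{\sigma_1}=\widehat{\om_1}^{\beta+1}/(\beta+1)$, the equivalence $\nu\in\Dd\Leftrightarrow\om\in\Dd$, and the entire argument for the $P^+_\om$ statement are all correct (the last essentially agrees with the paper, which invokes \cite[Theorem~13, Proposition~20]{PelaezRattya2019} where you assemble Theorem~\ref{th:A}). The genuine gap is the heart of the theorem: sufficiency of $A_p(\om,\nu)<\infty$ for $P_\om$ when $\om\in\DD\setminus\Dd$. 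There you offer only a plan --- bound $\|P_\om f\|_{L^p_\nu}$ by $\|(H_\om+H_\om^\star)(g)\|_{L^p_\nu}$ using ``the kernel estimates available for $\om\in\DD$'' --- and you concede that ``the hard part will be exactly this domination.'' That is not a proof, and the proposed route is precisely the one the paper warns is bound to fail: the known kernel estimates for $\om\in\DD$ concern $|B^\om_z|$ and its integral means, so any bound derived from them applies verbatim to $P^+_\om(|f|)\ge|P_\om(f)|$. Since $H_\om+H_\om^\star$ is bounded on $L^p_\nu$ whenever $A_p(\om,\nu)<\infty$ by Theorem~\ref{th:calderon}, such a domination scheme would yield boundedness of $P^+_\om$ on $L^p_\nu$ --- contradicting Theorem~\ref{th:A}(i)$\Rightarrow$(ii), because, as you yourself observe, $\nu\notin\Dd$ in this case. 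So cancellation must enter, but you supply no mechanism for it; the remark that the domination ``spends the cancellation'' is an aspiration, not an argument.

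The paper exploits cancellation by a completely different device, and the Calder\'on operator plays no role in its proof of Theorem~\ref{pr:potencias}. First, Theorem~\ref{th:Pwdospesos} reduces boundedness of $P_\om$ to the duality $\left(A^{p'}_\sigma\right)^\star\simeq A^p_\nu$ under the $A^2_\om$-pairing. The special form $\nu=\om\widehat{\om_1}^\alpha$ then makes your identities $\widehat{\nu_1}(\rho_n)=2^{-n(1+\alpha)}/(1+\alpha)$ and $\widehat{\sigma_1}(\rho_n)=2^{-n(1-\alpha p'/p)}/(1-\alpha p'/p)$, for the blocks $\Delta^\om_n$ determined by $\widehat{\om_1}(\rho_n)=2^{-n}$, exact geometric sequences, so that by \cite[Theorem~3.4]{PRAntequera} one has $A^p_\nu=\ell^p_{\frac{\alpha+1}{p}}(H^p,\{\Delta^\om_n\})$ and $A^{p'}_\sigma=\ell^{p'}_{\frac{1}{p'}-\frac{\alpha}{p}}(H^{p'},\{\Delta^\om_n\})$. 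Pavlovi\'c's multiplier duality identifies $\left(A^{p'}_\sigma\right)^\star$ with $\ell^{p}_{\frac{\alpha}{p}-\frac{1}{p'}}\left(H^p,\{\Delta^\om_n\}\right)$ under the $H^2$-pairing, and finally the operator $I^\om(g)(z)=\sum_{k=0}^\infty\widehat{g}(k)\om_{2k+1}z^k$, which satisfies $\|\Delta^\om_n I^\om(g)\|_{H^p}\asymp2^{-n}\|\Delta^\om_n g\|_{H^p}$ and is therefore an isomorphism of $A^p_\nu$ onto that space, converts the $H^2$-pairing into the $A^2_\om$-pairing. The cancellation is thus absorbed into Hardy-space duality $(H^{p'})^\star\simeq H^p$ block by block --- this is exactly where the hypothesis $\nu=\om\widehat{\om_1}^\alpha$ is used, and it explains why the argument does not extend to answer Question~\ref{q2}. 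To complete your proof you would need to replace your third paragraph by this duality route, or by some other argument that never passes through $|B^\om_z|$.
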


The classical result \cite[Theorem~4.24]{Zhu} for the standard weights, attributed to Forelli and Rudin~\cite{ForRud74}, states that $P_\gamma:L^p_\beta\to L^p_\beta$ is bounded if and only $p(\gamma+1)>\beta+1$. This result is just a particular case of Theorem~\ref{pr:potencias}. Namely, if $\om(s)=(1-s^2)^\gamma$ and $\nu(s)=(1-s^2)^\beta$, where $\gamma,\beta>-1$, then 
$\nu=(2(\gamma+1))^{-\alpha}\om\widehat{\om_1}^\alpha$ for $\alpha=\frac{\beta-\gamma}{\gamma+1}>-1$, and $p(\gamma+1)>\beta+1$ if and only if $A_p(\om,\nu)<\infty$. Since $\DD$ is a very rich class of radial weights, the first statement in Theorem~\ref{pr:potencias} is a far reaching extension of \cite[Theorem~4.24]{Zhu}. The second statement is just a consequence of \cite[Proposition~20]{PelaezRattya2019} and the first statement. It is well-known among experts working on operator theory on spaces of analytic functions that the aforementioned result due to Forelli and Rudin is an extremely useful tool with a plethora of applications within this area of research~\cite{Zhu}. Therefore we expect that Theorem~\ref{pr:potencias} boosts the development of the operator theory in Bergman spaces $A^p_\om$ induced by $\om\in\DD$.

It is very important to realize the distinction between $\DDD$ and $\DD\setminus\DDD$ when proving Theorem~\ref{pr:potencias}. Namely, since $P_\om$ and $P^+_\om$ are not always simultaneously bounded when $\DD\setminus\DDD$, 
any technique, where $|P_\om(f)|$ is dominated by $P^+_\om(|f|)$, is bound to fail all together to prove Theorem~\ref{pr:potencias}. In particular, this shows that arguments employed in the proofs of \cite[Theorem~11]{PelRatKernels} and \cite[Theorem~13]{PelaezRattya2019} do not serve us here. The techniques we will use to prove Theorem~\ref{pr:potencias} are by no means standard. Our first step towards the theorem is the following reformulation of \eqref{twoweight} in our setting via a duality relation between weighted Bergman spaces.
 
\begin{theorem}\label{th:Pwdospesos}
Let $1<p<\infty$ and $\om\in\DD$, and let $\nu$ be a radial weight. Then $P_\om:L^p_\nu\to L^p_\nu$ is bounded if and only if $\left(A^{p'}_\sigma\right)^\star\simeq A^p_\nu$ via the $A^2_\om$-pairing with equivalence of norms.
\end{theorem}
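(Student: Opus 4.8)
The plan is a duality argument organized around the $A^2_\om$-pairing $\langle f,g\rangle_\om=\int_\D f\overline g\,\om\,dA$, the bilinear form that defines $P_\om$. The key bookkeeping is that this pairing realizes the K\"othe duals of the Lebesgue spaces in play: since $\sigma=(\om/\nu^{1/p})^{p'}$ obeys $\om=\sigma^{1/p'}\nu^{1/p}$, equivalently $\om^{p'}\nu^{-p'/p}=\sigma$ and $\om^{p}\sigma^{-p/p'}=\nu$, a split H\"older inequality together with the standard identification $(L^p)^\star=L^{p'}$ yields isometric isomorphisms $(L^p_\nu)^\star\simeq L^{p'}_\sigma$ and $(L^{p'}_\sigma)^\star\simeq L^p_\nu$, both implemented by $\langle\cdot,\cdot\rangle_\om$. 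This explains the definition of $\sigma$ and reduces the theorem to transferring these $L$-space dualities to the analytic subspaces through $P_\om$.

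Two facts about $P_\om$ drive the argument. It is self-adjoint for $\langle\cdot,\cdot\rangle_\om$ because the kernel is Hermitian, $B^\om_z(\zeta)=\overline{B^\om_\zeta(z)}$, so (once Fubini is justified) $\langle P_\om f,g\rangle_\om=\langle f,P_\om g\rangle_\om$; and it reproduces analytic functions, $P_\om g=g$ on Bergman spaces, which I would obtain by reproducing monomials exactly via their $L^2_\om$-orthogonality and then invoking density of polynomials in $A^q_\mu$ for radial $\mu$. For the forward direction, assume $P_\om$ is bounded on $L^p_\nu$; self-adjointness and the $L$-space duality above then make it bounded on $L^{p'}_\sigma$ as well. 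Each $g\in A^p_\nu$ induces $L_g(f)=\langle f,g\rangle_\om$ on $A^{p'}_\sigma$ with $\|L_g\|\le\|g\|_{A^p_\nu}$ by H\"older, giving $A^p_\nu\hookrightarrow(A^{p'}_\sigma)^\star$ unconditionally. Conversely, given $\Lambda\in(A^{p'}_\sigma)^\star$, I would extend it by Hahn--Banach to $(L^{p'}_\sigma)^\star$ and represent it as $\Lambda(f)=\langle f,G\rangle_\om$ with $G\in L^p_\nu$ and $\|G\|_{L^p_\nu}=\|\Lambda\|$; since $G$ need not be analytic, set $g:=P_\om G$, and for analytic $f\in A^{p'}_\sigma$ use $P_\om f=f$ and self-adjointness to get $\langle f,G\rangle_\om=\langle f,P_\om G\rangle_\om=\langle f,g\rangle_\om$, so $\Lambda=L_g$ with $g\in A^p_\nu$ and $\|g\|_{A^p_\nu}\le\|P_\om\|\,\|\Lambda\|$. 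This is surjectivity with the reverse norm bound, hence $(A^{p'}_\sigma)^\star\simeq A^p_\nu$.

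For the converse I would assume the duality and estimate $\|P_\om f\|_{A^p_\nu}$ by testing, working first with $f$ in a dense, well-behaved subclass (say compactly supported $f$) for which $P_\om f$ is analytic and all integrals converge absolutely. The functional $g\mapsto\langle g,P_\om f\rangle_\om=\langle P_\om g,f\rangle_\om=\langle g,f\rangle_\om$ is bounded on $A^{p'}_\sigma$ with norm $\le\|f\|_{L^p_\nu}$ by H\"older, so the assumed isomorphism supplies $h\in A^p_\nu$ representing the same functional with $\|h\|_{A^p_\nu}\lesssim\|f\|_{L^p_\nu}$. Testing against monomials and comparing $\om$-moments shows that $h$ and $P_\om f$ have the same Taylor coefficients, whence $P_\om f=h\in A^p_\nu$ with the desired bound; a density argument then removes the restriction on $f$ and yields boundedness of $P_\om$ on $L^p_\nu$.

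The main obstacle is not the duality scheme but the analytic justifications that keep every step legitimate when $\sigma$ is merely radial and possibly non-doubling: validating Fubini for self-adjointness, proving $P_\om f=f$ throughout $A^{p'}_\sigma$, ensuring that monomials lie in $A^{p'}_\sigma$ (finiteness of the $\sigma$-moments) so that coefficient comparison actually identifies $P_\om f$ with its representing function, and securing a dense subclass on which $P_\om f$ is a priori in $A^p_\nu$. This is precisely where I would exploit the hypothesis $\om\in\DD$ and the kernel estimates it affords.
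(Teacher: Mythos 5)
Your proposal is correct and is essentially the paper's own argument: the same isometric duality $(L^{p'}_\sigma)^\star\simeq L^{p}_\nu$ implemented by the $L^2_\om$-pairing via the factorization $\om=\sigma^{1/p'}\nu^{1/p}$, Hahn--Banach extension of a functional on $A^{p'}_\sigma$, the representative $g=P_\om(H)\in A^p_\nu$, and identification of the representing function by testing against monomials. The technicalities you flag at the end are settled in the paper exactly as you anticipate: boundedness of $P_\om$ on $L^p_\nu$ forces $A_p(\om,\nu)<\infty$ (Lemma~\ref{le:nec}, which is where $\om\in\DD$ enters), so in particular $\sigma$ is an integrable weight and monomials lie in $A^{p'}_\sigma$, while the self-adjointness/Fubini identity $\langle P_\om(p),H\rangle_{L^2_\om}=\langle p,P_\om(H)\rangle_{L^2_\om}$ is established only for polynomials $p$ via a truncation over $D(0,r)$ with $r\to1^-$ and uniform convergence of the kernel series, and then upgraded by density of polynomials in $A^{p'}_\sigma$ together with H\"older's inequality, thereby avoiding your harder route of proving $P_\om f=f$ on all of $A^{p'}_\sigma$ and justifying a global Fubini.
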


The next step in the proof of Theorem~\ref{pr:potencias} is to observe that the dual of~$A^{p'}_\sigma$ can be identified with the space of coefficient multipliers from $A^{p'}_\sigma$ to $H^\infty$ under the $H^2$-pairing (the Cauchy pairing). As usual, for $0<p\le\infty$, we write $H^p$ for the classical Hardy space. This observation becomes practical once we endow $A^p_\om$ with a suitable equivalent norm. The one that works here is an $\ell^p$-type norm of the Hardy norms of blocks of the Maclaurin series whose size depend on the weight~$\om$, see~\cite[Theorem~3.4]{PRAntequera} and also \cite[Theorem~4]{PelRathg}. These arguments allow us to show that there exists a space $X_p(\om,\nu)$ of analytic functions such that $\left(A^{p'}_\sigma\right)^\star\simeq X_p(\om,\nu)$ via the $H^2$-pairing. Finally, to get the desired pairing and space, we define the operator $I^\om(g)(z)=\sum_{k=0}^\infty\widehat{g}(k)\om_{2k+1}z^{k}$, and we prove that $I^\om: A^p_\nu\to X_p(\om,\nu)$ is isomorphic and onto. This establishes Theorem~\ref{pr:potencias}.

We now turn back to study relationships between the boundedness of the radial averaging operator $H_\om$ on $L^p_\nu$, and the condition $A_p(\om,\nu)<\infty$. Recall that $M_p(\om,\nu)<\infty$ describes the boundedness of $H_\om$ on $L^p_\nu$ by \cite[Theorem~2]{Muckenhoupt1972}, and $A_p(\om,\nu)<\infty$ always implies $M_p(\om,\nu)<\infty$. We proceed with necessary definitions.

The adjoint of $H_\om$, with respect to $\langle\cdot,\cdot\rangle_{L^2_\om}$, is the operator
	$$
	H_\om^\star(f)(z)=\int_{0}^{|z|} f\left(s\frac{z}{|z|}\right)\frac{\om_1(s)}{\omgu(s)}\,ds,\quad z\in\D\setminus\{0\}.
	$$  
For a radial weight $\eta$, the associated H\"ormander maximal function \cite{HormanderL67} and Stieltjes transfrom
\cite{Widder} are defined by
	\begin{equation}\label{eq:maximal}
	M_{\eta,\rad}(f)(z)
	=\sup_{b<|z|}\frac{\int_b^1 \left|f\left(s\frac{z}{|z|}\right)\right|\eta(s)\,ds}{\widehat{\eta}(b)},\quad z\in\D\setminus\{0\},
	\end{equation}
and 
	\begin{equation}\label{eq:Sti}
	S_{\eta,\rad}(f)(z)
	=\int_0^1\frac{f\left(s\frac{z}{|z|}\right) \eta(s)}{\widehat{\eta}(s)+\widehat{\eta}(|z|)}\,ds,\quad z\in\D\setminus\{0\},
	\end{equation}
	respectively.
Our findings concerning the averaging radial operators and the condition \eqref{eq:apcondition} are gathered in the next result. 	

\begin{theorem}\label{th:calderon}
Let $1<p<\infty$ and let $\om$ and $\nu$ be radial weights. Then the following statements are equivalent:
	\begin{enumerate}
	\item[\rm(i)] $H_\om+H_\om^\star :L^p_\nu\to L^p_\nu$ is bounded;
	\item[\rm(ii)] $S_{\om_1,\rad} :L^p_\nu\to L^p_\nu$ is bounded;
  \item[\rm(iii)] $M_{\om_1,\rad} :L^p_\nu\to L^p_\nu$ is bounded;
	\item[\rm(iv)] $A_p(\om,\nu)<\infty$.
	\end{enumerate}
Moreover,
	\begin{equation*}
	\|H_\om+H_\om^\star\|_{L^p_{\nu}\to L^p_{\nu}}
	\asymp\|S_{\om_1,\rad}\|_{L^p_{\nu}\to L^p_{\nu}}
	\asymp\|M_{\om_1,\rad}\|_{L^p_{\nu}\to L^p_{\nu}}
	\asymp A_p(\om,\nu).
	\end{equation*}
\end{theorem}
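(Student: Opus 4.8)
The plan is to run the four equivalences as a single cycle $(\mathrm{i})\Leftrightarrow(\mathrm{ii})\Rightarrow(\mathrm{iii})\Rightarrow(\mathrm{iv})\Rightarrow(\mathrm{i})$, carrying the quantitative constants along so that the asserted norm comparisons with $A_p(\om,\nu)$ drop out at the end. Since all the operators and the weight $\nu$ are radial, it suffices to argue on a single radius, i.e.\ to treat everything as a one–dimensional weighted inequality on $(0,1)$ against $\om_1(s)\,ds$ and $\nu_1(s)\,ds$.

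Three of the links are pointwise kernel comparisons. Writing $r=|z|$ and using that $\omgu$ is decreasing, I would split the Stieltjes kernel at $s=r$: for $s<r$ one has $\omgu(s)+\omgu(r)\asymp\omgu(s)$ and for $s>r$ one has $\omgu(s)+\omgu(r)\asymp\omgu(r)$, whence $S_{\om_1,\rad}(|f|)\asymp H^\star_\om(|f|)+H_\om(|f|)$ pointwise; this gives $(\mathrm{i})\Leftrightarrow(\mathrm{ii})$ with comparable norms. For the maximal function, the same monotonicity shows, for every admissible $b<r$, that $\omgu(b)^{-1}\int_b^r|f|\om_1\le H^\star_\om(|f|)(r)$ and $\omgu(b)^{-1}\int_r^1|f|\om_1\le H_\om(|f|)(r)$, so that $H_\om(|f|)\le M_{\om_1,\rad}(f)\lesssim S_{\om_1,\rad}(|f|)$; this yields $(\mathrm{ii})\Rightarrow(\mathrm{iii})$ and, as a byproduct, $(\mathrm{iii})\Rightarrow\{H_\om\text{ bounded}\}$. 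Next, $(\mathrm{iii})\Rightarrow(\mathrm{iv})$ is a testing argument: fixing $r$ and testing $M_{\om_1,\rad}$ on radial functions supported in $[r,1)$, the choice $b=r$ forces $M_{\om_1,\rad}(f)\ge\omgu(r)^{-1}\int_r^1|f|\om_1$ on the whole annulus $\{|z|>r\}$, and optimizing the resulting inequality over $f$ by Hölder (extremiser $\propto(\om_1/\nu_1)^{p'-1}$) returns exactly $A_p(\om,\nu)\le\|M_{\om_1,\rad}\|$.

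The substantial implication is $(\mathrm{iv})\Rightarrow(\mathrm{i})$: I must show that the single condition $A_p(\om,\nu)<\infty$ controls simultaneously the averaging operator $H_\om$ and its adjoint $H^\star_\om$. By Muckenhoupt's theorem $H_\om$ is bounded iff $M_p(\om,\nu)<\infty$, and the classical dual Hardy inequality shows $H^\star_\om$ is bounded iff the conjugate condition $\widetilde{M}_p(\om,\nu):=\sup_r\nugu(r)^{1/p}\big(\int_0^r\sigma_1\,\omgu^{-p'}\big)^{1/p'}<\infty$ holds; so it is enough to prove $A_p<\infty\Rightarrow M_p<\infty$ and $A_p<\infty\Rightarrow\widetilde{M}_p<\infty$. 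The engine for both is a geometric–growth lemma extracted from $A_p$ together with the elementary estimate $\int_{r'}^r\om_1\le\big(\int_{r'}^r\nu_1\big)^{1/p}\big(\int_{r'}^r\sigma_1\big)^{1/p'}$. Indeed, choosing $r'<r$ with $\omgu(r')=2\omgu(r)$ makes the left side equal to $\omgu(r)$; bounding the two factors on the right by $\nugu(r')$ and $\sgu(r')-\sgu(r)$ respectively and inserting $A_p$ at $r'$ yields $\sgu(r)\le\theta\,\sgu(r')$ with $\theta=1-(2A_p)^{-p'}\in(0,1)$, and symmetrically $\nugu(r)\le\theta'\,\nugu(r')$. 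Thus along the scales $r=r_0>r_1>\cdots$ defined by $\omgu(r_j)=2^j\omgu(r)$, both $\sgu$ and $\nugu$ grow at least geometrically.

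With this in hand I would split $\int_0^r$ over the annuli $[r_{j+1},r_j]$, on which $\omgu\asymp2^j\omgu(r)$; bounding each local integral of $\nu_1$ (resp.\ $\sigma_1$) by $\nugu(r_{j+1})$ (resp.\ $\sgu(r_{j+1})$) and invoking $A_p$ at $r_{j+1}$ reduces $M_p$ (resp.\ $\widetilde{M}_p$) to a sum $\sum_j(\sgu(r)/\sgu(r_{j+1}))^{p-1}$ (resp.\ $\sum_j(\nugu(r)/\nugu(r_{j+1}))^{p'/p}$), which the geometric growth makes a convergent geometric series bounded by a constant times $A_p$. Hence $M_p,\widetilde{M}_p\lesssim A_p$, so $\|H_\om+H^\star_\om\|\le\|H_\om\|+\|H^\star_\om\|\lesssim A_p$, closing the cycle. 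Threading the constants through the comparisons $A_p\le\|M_{\om_1,\rad}\|\lesssim\|S_{\om_1,\rad}\|\asymp\|H_\om+H^\star_\om\|\lesssim A_p$ then gives the four–fold norm equivalence. The one place demanding genuine care—and the main obstacle—is precisely the geometric–growth lemma: since $\om$ and $\nu$ carry no doubling and $|P_\om f|$ cannot be dominated by $P^+_\om|f|$, the fact that the \emph{one} anchored quantity $A_p$ simultaneously forces both one–sided (Hardy and dual–Hardy) conditions must be wrung out of the interplay between $A_p$ and Hölder's inequality rather than imported from classical $A_p$ theory.
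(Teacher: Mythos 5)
Your proposal is correct, and it takes a genuinely different route at the one hard implication. The paper proves (iv)$\Rightarrow$(iii) directly: it decomposes $(0,1)$ into the level sets $\{2^k<M_{\om_1}f_\theta\le 2^{k+1}\}=(b_k(\theta),b_{k+1}(\theta)]$, uses the self-improvement Lemma~\ref{le:apbk} (from $A_p$ and H\"older, $\int_{b_k}^1\nu^{-p'/p}\om_1\lesssim\int_{b_k}^{b_{k+1}}\nu^{-p'/p}\om_1$), and closes with the weak-$(1,1)$ estimate plus Marcinkiewicz interpolation for $M_\eta$ (Lemma~\ref{le:Meta}); it then dominates $H_\om$ pointwise by $M_{\om_1,\rad}$ and handles $H_\om^\star$ by duality through the symmetry $A_p(\om,\om\nu)=A_{p'}\bigl(\om,\om\nu^{-p'/p}\bigr)$, never invoking Muckenhoupt's theorem in this proof. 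You instead route (iv)$\Rightarrow$(i) through the two classical one-dimensional characterizations (for $H_\om$ and its dual) and verify $M_p<\infty$ and $\widetilde{M}_p<\infty$ from $A_p$ by annular summation along $\omgu(r_j)=2^j\omgu(r)$; your geometric-growth lemma, $\sgu(r)\le\bigl(1-(2A_p)^{-p'}\bigr)\sgu(r')$ and its $\nugu$-analogue when $\omgu(r')=2\omgu(r)$, is correct and is in substance the same $A_p$+H\"older self-improvement as Lemma~\ref{le:apbk}, anchored at $\omgu$-dyadic points rather than at the stopping points $b_k(\theta)$. The paper's route buys self-containedness, the maximal-function bound as primary output, and $H^\star_\om$ for free by duality; yours avoids the stopping-time and interpolation machinery, leans on citable Hardy-inequality results, incidentally reproves $A_p\Rightarrow M_p$, and your two-sided pointwise comparison $S_{\om_1,\rad}\asymp H_\om+H_\om^\star$ gives (i)$\Leftrightarrow$(ii) directly, where the paper closes that link only around the cycle. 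In a full write-up you should patch three routine points: the ``$+1$'' in $M_p$ needs $\sgu(0)<\infty$, which indeed follows from $A_p(\om,\nu)<\infty$ at $r=0$; the chain $r_0>r_1>\cdots$ terminates once $2^j\omgu(r)>\omgu(0)$, so the last, truncated annulus must be handled using $A_p$ at the origin; and your implied constants depend on $A_p$ through $\theta=1-(2A_p)^{-p'}$, so the ``moreover'' equivalence emerges with constants depending on $p$ and $A_p$ -- but the constant in the paper's Lemma~\ref{le:apbk} carries the same dependence, so this matches the paper's actual quantitative content.
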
   
As for the proof of Theorem~\ref{th:calderon}, the inequality $\|M_{\om_1,\rad}\|_{L^p_{\nu}\to L^p_{\nu}}\lesssim A_p(\om,\nu)$ boils down to a decomposition of the interval $[0,1)$ in level sets for the maximal function $M_{\om_1,\rad}f_\theta$, where $f_\theta(r)=f(re^{i\theta})$, together with some properties of the maximal function. Since (iii)$\Rightarrow$(iv) is deduced by standard techniques, we have the equivalence between (iii) and (iv). After that, we observe the pointwise inequalities 
	$$
	M_{\om_1,\rad}(f)(z)\le 2  S_{\om_1,\rad}(f)(z)\le H_\om(f)(z)+H_\om^\star(f)(z),\quad z\in\D,\quad f\ge 0,
	$$
which yield (i)$\Rightarrow$(ii)$\Rightarrow$(iii). Finally, the implication (iv)$\Rightarrow$(i) follows from the inequalities
$\|H_\om\|_{L^p_{\nu}}\le\|M_{\om_1,\rad}\|_{L^p_{\nu}\to L^p_{\nu}}\lesssim A_p(\om,\nu)$ and duality relations.

The remaining part of the paper is organized as follows. Section~\ref{s:analitica} contains the proofs of Lemma~\ref{le:nec}, Theorem~\ref{th:Pwdospesos} and Theorem~\ref{pr:potencias} in the said order. Theorem~\ref{th:calderon} is proved in Section~\ref{s:real}.

Throughout the paper we repeatedly use the fact that $\om\in\DD$ (resp. $\om\in\Dd$) if and only if $\om_1\in\DD$ (resp. $\om_1\in\Dd$). Further, as usual, for two non-negative functions $A$ and $B$, the notation $A\lesssim B$ (or $B\gtrsim A$) means that there exists a constant $C>0$, independent of the variables involved, such that $A\le C\,B$. Furthermore, we will write $A\asymp B$ when $A\lesssim B\lesssim A$. 

\section{Bergman projection}\label{s:analitica}

We begin with a detailed proof of the necessary condition \eqref{eq:apcondition} for $P_\om:L^p_\nu\to L^p_\nu$ to be bounded.

\medskip

\begin{Prf}{\em{Lemma~\ref{le:nec}.}} Assume that $P_\om:L^p_\nu\to L^p_\nu$ is bounded. Then $P_\omega(f)$ is well defined almost everywhere on $\D$ for each $f\in L^p_\nu$. It follows that 
	\begin{equation}\label{eq:i1}
	\int_{\D}|f(\z)||B^\om_z(\z)|\om(\z)\,dA(\z)<\infty,\quad f\in L^p_\nu, 
	\end{equation}
for almost every $z\in\D$. Now that $B^\om_{z}(\z)=\sum_{n=0}^\infty \frac{(\overline{z}\z)^n}{2\om_{2n+1}}$, with $\om_{2n+1}=\int_0^1 s^{2n+1}\om(s)\,ds$, we have $B^\om_0(\z)=\frac{1}{2\om_1}$. Therefore there exists $\delta=\delta(\om)>0$ such that $|B^\om_z(\z)|\ge\frac{1}{4\om_1}$ for all $\z\in\D$ and $z$ such that $|z|<\delta$. This together with \eqref{eq:i1} yields $L^p_\nu\subset L^1_\om$.
Let us show that the embedding is continuous. If this were not the case, then for each $n\in\N$ there would exist $f_n\in L^p_\nu$ such that $\|f_n\|_{L^p_\nu}=1$ and $\|f_n\|_{L^1_\om}\ge n^3$. Consider the function $F=\sum_{n=1}^\infty \frac{|f_n|}{n^2}$. It satisfies
	\begin{equation}\label{em1}
	\|F\|_{L^p_\nu}\le\sum_{n=1}^\infty \frac{\| f_n\|_{L^p_\nu}}{n^2}=\sum_{n=1}^\infty \frac{1}{n^2}<\infty
	\end{equation}
and $F\ge\frac{|f_n|}{n^2}$ for each $n\in\N$. Therefore $\|F\|_{L^1_\om}\ge\frac{\|f_n\|_{L^1_\om}}{n^2}\ge n$ for each $n\in\N$, and thus $F\notin L^1_\omega$. This together with \eqref{em1} contradicts $L^p_\nu\subset L^1_\om$. Consequently, there exists a constant $C=C(\om,\nu,p)>0$ such that
	\begin{equation}\label{eq:embedding}
	\|f\|_{L^1_\om}\le C\|f\|_{L^p_\nu},\quad f\in L^p_\nu.	
	\end{equation}
This inequality implies that $\om$ is absolutely continuous with respect to $\nu$, and hence the function $f_n=\min\{n,\left(\frac{\om}{\nu}\right)^{\frac{1}{p-1}}\}$ is well-defined almost everywhere on $\D$ for all $n\in\N$. Moreover, $f_n^p\nu\le f_n\om$ almost everywhere, and hence \eqref{eq:embedding} yields $\sup_n\|f_n\|_{L^1_\om}\le C^{p'}$. The monotone convergence theorem now shows that
  \begin{equation}\label{eq:ini}
  2\int_{0}^1 s\left(\frac{\om(s)}{\nu(s)^{\frac1p}}\right)^{p'}\,ds
	=\int_\D\lim_{n\to\infty}|f_n(z)|\om(z)\,dA(z)
	=\lim_{n\to\infty}\|f_n\|_{L^1_\om}<\infty.
\end{equation}

Recall that the adjoint operator $P^*_\om$ is defined via the identity 
	\begin{equation}\label{eq:adjointPomega}
	\langle P_\om(f), g \rangle_{L^2_\nu}= \langle f, P^*_\om(g) \rangle_{L^2_\nu}, \quad f\in L^p_\nu,\quad g\in  L^{p'}_\nu.
	\end{equation}
We next establish an integral formula for the image of the monomial $m_n(z)=z^n$ under $P^*_\om$. First observe that for each fixed $z\in\D$ the series $\sum_{k=0}^\infty\frac{(z\overline{\z})^k}{2\om_{2k+1}}$ converges uniformly on $\overline{\D}$. By using this and Fubini's theorem we deduce that each bounded function $f$ satisfies 
	\begin{equation}
	\begin{split}\label{eq:i2}
	\langle P_\om(f), m_n \rangle_{L^2_\nu}
	&=\int_{\D}\left(\int_\D f(\z)B^\om_\z(z)\om(\z)\,dA(\z) \right)\overline{m_n(z)}\nu(z)\,dA(z)\\
	&=\int_{\D}\left(\sum_{k=0}^\infty\frac{\left(\int_\D f(\z)\overline{\z}^k\om(\z)\,dA(\z) \right)z^k}{2\om_{2k+1}}\right)
	\overline{m_n(z)}\nu(z)\,dA(z)\\
	&=\int_{\D}f(\z)\frac{\left(\int_\D\left(z\overline{\z}\right)^n \overline{m_n(z)}\nu(z)\,dA(z)  \right)}{2\om_{2n+1}}\om(\z)\,dA(\z)\\
	&=\int_{\D}f(\z)\overline{\left( \frac{\om(\z)}{\nu(\z)}\int_\D m_n(z)B^\om_z(\z)\nu(z)\,dA(z)\right)}\nu(\z)\,dA(\z).
	\end{split}
	\end{equation}
Define 
	$$
	L_\om(g)(\z)=\frac{\om(\z)}{\nu(\z)}\int_\D g(z)B^\om_z(\z)\nu(z)\,dA(z),\quad\zeta\in\D,\quad g\in L^1_\nu.
	$$
Observe that $L_\om(m_n)(\z)=\frac{\om(\z)\nu_{2n+1}}{\nu(\z)\om_{2n+1}}\z^n$ and $L_\om(m_n)\in L^{p'}_\nu$ for all $n\in\N$ by
\eqref{eq:ini}. For each $f\in L^p_\nu$, choose a sequence of simple functions $\{s_k\}$ which converges to $f$ in $L^p_\nu$.
Then, since $P_\om:L^p_\nu\to L^p_\nu$ is bounded by the hypothesis, \eqref{eq:i2} yields
	\begin{equation*}
  \begin{split}
  \langle P_\om(f),m_n\rangle_{L^2_\nu}
  =\lim_{k\to\infty}\langle P_\om(s_k),m_n\rangle_{L^2_\nu}
  =\lim_{k\to\infty}\langle s_k,L_\om(m_n)\rangle_{L^2_\nu}
  =\langle f,L_\om(m_n)\rangle_{L^2_\nu},
  \end{split}
	\end{equation*}
and hence $P^\star_\om(m_n)=L_\om(m_n)$ almost everywhere on $\D$.

Since $\om\in\DD$ by the hypothesis of the theorem and $P_\om:L^p_\nu\to L^p_\nu$ is bounded, \cite[Proposition~19(i)]{PelaezRattya2019} implies $\nu_1\in\DD$. Therefore \cite[Lemma~2.1]{PelSum14} yields $\|m_n\|^{p'}_{L^{p'}_\nu}=2\nu_{np'+1}\asymp\nugu\left(1-\frac{1}{n}\right)\asymp\nu_{2n+1}$ and $\om_{2n+1}\asymp\omgu\left(1-\frac{1}{n}\right)$ for all $n\in\N$. Now that $P^*_\om: L^{p'}_\nu\to L^{p'}_\nu$ is bounded, we deduce
	\begin{equation*}
	\begin{split}
	\nugu\left(1-\frac{1}{n}\right)
	&\asymp\|m_n\|^{p'}_{L^{p'}_\nu}\gtrsim \| P^\star_\om(m_n) \|^{p'}_{L^{p'}_\nu}
	=2\left(\frac{\nu_{2n+1}}{\om_{2n+1}} \right)^{p'}\int_{0}^1 s^{np'+1}\left(\frac{\om(s)}{\nu(s)^{\frac1p}}\right)^{p'}\,ds\\
	&\gtrsim \left( \frac{\nugu\left(1-\frac{1}{n}\right)}{\omgu\left(1-\frac{1}{n}\right)}\right)^{p'}
	\sgu\left(1-\frac{1}{n}\right),\quad n\in\N\setminus\{1\}.
	\end{split}
	\end{equation*}
which implies
	$$
	\sup_{1/2\le r<1}\frac{\nugu(r)^{\frac{1}{p}}\sgu(r)^{\frac{1}{p'}}}{\omgu(r)}<\infty
	$$
since $\om_1\in\DD$. Then bearing in mind \eqref{eq:ini},
it follows that $A_p(\om,\nu)<\infty$. 
\end{Prf}

\medskip

Now we proceed with the proof of the convenient reformulation of \eqref{twoweight} in terms of an appropriate duality.

\medskip

\begin{Prf}{\em{Theorem~\ref{th:Pwdospesos}.}}
Assume first that  $(A^{p'}_\sigma)^\star\simeq A^{p}_\nu$ via the $A^2_\om$-pairing with equivalence of norms. Let $H\in L^p_\nu$ and define $h=H\left(\frac{\om}{\nu}\right)^{-p'/p}$ outside of the set on which $\om$ vanishes. Then $\|h\|_{L^{p}_\sigma}=\|H\|_{L^{p}_\nu}$. Consider the bounded linear functional $T_h(f)=\langle f,h\rangle_{L^2_\sigma}$ on $L^{p'}_{\sigma}$ that satisfies $\|T_h\|=\|h\|_{L^{p}_\sigma}$. Then $T_h(f)=\langle f,H\rangle_{L^2_\om}$ and $\|T_h\|=\|h\|_{L^{p}_\sigma}=\|H\|_{L^{p}_\nu}$. For each polynomial $p$, with the Maclaurin coefficients $\{\widehat{p}(n)\}$, Fubini's theorem yields
    \begin{equation}
    \begin{split}\label{eq:du1}
    T_h(p)&=\langle P_\om(p),H\rangle_{L^2_\om}
    =\lim_{r\to1^-}\int_{D(0,r)}P_\om(p)(z)\overline{H(z)}\om(z)\,dA(z)\\
    &=\lim_{r\to1^-}\int_{\D}p(\z)
		\left(\sum_{n=0}^\infty\frac{\int_{D(0,r)}\overline{H(z)}z^n \om(z)\,dA(z)}{2\om_{2n+1}} \overline{\z}^n\right)\om(\z)\,dA(\z)\\
    &=\lim_{r\to 1^-}\sum_{n=0}^\infty\widehat{p}(n)\int_{D(0,r)}\overline{H(z)}z^n\om(z)\,dA(z)
    =\langle p,P_\om(H)\rangle_{L^2_\om}.
    \end{split}
    \end{equation}
By the hypothesis, there exists $g\in A^{{p}}_{\nu}$ such that $T_h(f)=\langle f,g\rangle_{A^2_\om}$ for all $f\in A^{p'}_\sigma$, and $\|T_h\|\asymp\|g\|_{A^{p}_\nu}$. In particular, $T_h(m_n)=\langle m_n,g\rangle_{A^2_\om}=\langle m_n,P_\om(H)\rangle_{A^2_\om}$ for each monomial $m_n(z)=z^n$ with $n\in\N\cup\{0\}$. It follows that $P_\om(H)=g$, and hence $\|P_\om(H)\|_{A^{{p}}_{\nu}}\asymp\|T_h\|=\|H\|_{L^{p}_{\nu}}$.
Since $H\in L^{p}_\nu$ was arbitrarily taken, this shows that $P_\om: L^{p}_\nu\to L^{p}_{\nu}$ is bounded.

Reciprocally, assume   that  $P_\om:L^p_\nu\to L^p_\nu$ is bounded, then  \eqref{eq:apcondition} holds by Lemma~\ref{le:nec},  and in particular $\sigma$ is a weight.
 Let $T\in (A^{p'}_\sigma)^\star$, then $T$
can be extended to a bounded linear functional $\widetilde{T}$ on $L^{p'}_\sigma$ with $\|T\|=\|\widetilde T\|$ by the Hahn-Banach theorem. By arguing as in the first part of the proof, we see that $(L^{p'}_\sigma)^\star$ is isometrically isomorphic to $L^{p}_{\nu}$ via the $L^2_\om$-pairing. Hence there exists $H\in L^{p}_\nu$ such that $\widetilde{T}(f)=\langle f,H\rangle_{L^2_\om}$ for all $f\in L^{p'}_\sigma$, and $\|\widetilde T\|=\|H\|_{L^{p}_\nu}$. Now, for each polynomial $p$, with Maclaurin coefficients $\{\widehat{p}(n)\}$, 
\eqref{eq:du1}
 yields $$ T(p)=\langle P_\om(p),H\rangle_{L^2_\om}
    =\langle p,P_\om(H)\rangle_{L^2_\om}
    =\langle p,g\rangle_{L^2_\om},
   $$
where $g=P_\om(H)\in A^{p}_\nu$ satisfies $\|g\|_{A^{p}_\nu}\le\|P_\om\|_{L^p_\nu\to L^p_\nu}\|H\|_{L^{p}_{\nu}}=\|P_\om\|_{L^p_\nu\to L^p_\nu}\|T\|<\infty$ by the hypothesis.
Bearing in mind that the polynomials are dense in $A^{p'}_\sigma$, because $\sigma$ is radial, and using H\"older's inequality, it follows that  $T(f)=\langle f,g\rangle_{A^2_\om}$ for all $f\in A^{p'}_\sigma$ with $\|g\|_{A^{p}_\nu}\lesssim\|T\|$. Since in addition, each functional $T_g(f)=\langle f,g\rangle_{A^2_\om}$ induced by $g\in A^{p}_\nu$, belongs to $(A^{p'}_\sigma)^\star$ with $\|T_g\|\le\|g\|_{A^{p}_\nu}$ by H\"older's inequality, we deduce $(A^{p'}_\sigma)^\star\simeq A^{p}_\nu$ via the $A^2_\om$-pairing with equivalence of norms.
\end{Prf}

\medskip 

The proof of Theorem~\ref{pr:potencias} requires some preparations. In particular, we will use a suitable norm in the weighted Bergman space induced by a weight in $\DD$. For that purpose some more notation are in order.
The Hadamard product of $f(z)=\sum_{k=0}^\infty \widehat{f}(k)z^k\in\H(\D)$ and $g(z)=\sum_{k=0}^\infty \widehat{g}(k)z^k\in\H(\D)$ is
    $$
    (f\ast g)(z)=\sum_{k=0}^\infty\widehat{f}(k)\widehat{g}(k)z^k,\quad z\in\D.
    $$
 For a Banach space $X\subset\H(\D)$, $s\in\mathbb{R}$, $0<q\le\infty$ and a sequence of polynomials $\{P_n\}_{n=0}^\infty$, let
    \begin{equation}\label{eq:lqXPn}
    \ell^q_s(X,\{P_n\})
    =\left\{f\in\H(\D):\|f\|^q_{\ell^q_s(X,\{P_n\})}=\sum_{n=0}^\infty\left(2^{-ns}\|P_n*f\|_X\right)^q<\infty\right\}, \quad 0<q<\infty,
    \end{equation}
and
    \begin{equation}\label{eq:linftyXPn}
    \ell^\infty_s(X,\{P_n\})
    =\left\{f\in\H(\D):\|f\|_{\ell^\infty_s(X,\{P_n\})}=\sup_n\left(2^{-ns}\|P_n*f\|_X\right)<\infty\right\}.
    \end{equation}
Further, for quasi-normed spaces $X,Y\subset\H(\D)$, let $[X,Y]=\{g\in\H(\D):f*g\in Y\,\text{for all} \,f\in X\}$ denote the space of coefficient multipliers from $X$ to $Y$ equipped with the norm
    $$
    \|g\|_{[X,Y]}=\sup\left\{\|f*g\|_Y:f\in X,\,\|f\|_X\le1\right\}.
    $$
    
With these preparations we are ready for the proof.

\medskip

\begin{Prf}{\em{Theorem~\ref{pr:potencias}}.}
Bearing in mind Lemma~\ref{le:nec} and 
Theorem~\ref{th:Pwdospesos} we only have to prove that
$\left(A^{p'}_\sigma\right)^\star\simeq A^p_\nu$ via the $A^2_\om$-pairing with equivalence of norms,
in order to ensure that $P_\om$ is bounded on $L^p_\nu$.
Observe that $\sigma=\om\widehat{\om_1}^{-\alpha\frac{p'}{p}}$, and hence the condition $A_p(\om,\nu)<\infty$ is equivalent to the inequality $-1<\alpha<\frac{p}{p'}$. Assume, without loss of generality, that $\widehat{\om_1}(0)=1$, and consider a sequence $\{\r_n\}_{n=0}^\infty=\{\r_n(\om)\}_{n=0}^\infty$ such that $\widehat{\om_1}(\r_n)=2^{-n}$ for all $n$. For $n\in\N$, define $M_n=M_n(\om)=E\left(\frac{1}{1-\r_{n}}\right)$, where $E(x)\in\N$ such that $E(x)\le x<E(x)+1$. Write
    $$
    I(0)=I_{\om}(0)=\left\{k\in\N\cup\{0\}:k<M_1\right\}
    $$
and
   \begin{equation*}
    I(n)=I_{\om}(n)=\left\{k\in\N:M_n\le k<M_{n+1}\right\},\quad n\in\N.
  \end{equation*}
For $f\in\H(\D)$, with the Maclaurin series $f(z)=\sum_{n=0}^\infty\widehat{f}(n)z^n$, define the polynomials $\Delta^{\om}_nf$ by
    $$
    \Delta_n^{\om}f(z)=\sum_{k\in I_{\om}(n)}\widehat{f}(k)z^k,\quad n\in\N\cup\{0\}.
    $$
Then, obviously, $f=\sum_{n=0}^\infty\Delta_n^\om f$. Observe that $\nu,\sigma\in \DD$ because $\om\in\DD$ and moreover
	$$
	\widehat{\nu_1}(\r_n)=\frac{2^{-n(1+\a)}}{1+\a}
	\quad\textrm{and}\quad\widehat{\sigma_1}(\r_n)=\frac{2^{-n\left(1-\a\frac{p'}{p}\right)}}{1-\a\frac{p'}{p}},\quad n\in\N\cup\{0\}.
	$$
Therefore,  \cite[Theorem~3.4]{PRAntequera}, see also \cite[Theorem~4.1]{PavMixnormI} and \cite[Theorem~4]{PelRathg} for similar results, shows that
    \begin{equation}\label{18p}
    \|f\|_{A^p_\nu}^p
    \asymp\sum_{n=0}^\infty \widehat{\nu_1}(\r_n)\|\Delta^{\om}_n f\|_{H^p}^p\asymp\sum_{n=0}^\infty 2^{-n(1+\a)}\|\Delta^{\om}_n f\|_{H^p}^p ,\quad f\in\H(\D),
    \end{equation}
    and
    \begin{equation}\label{18pprima}
    \|f\|_{A^{p'}_\sigma}^{p'}
    \asymp\sum_{n=0}^\infty \widehat{\sigma_1}(\r_n)\|\Delta^{\om}_n f\|_{H^{p'}}^{p'}\asymp
    \sum_{n=0}^\infty 2^{-n\left(1-\a\frac{p'}{p}\right)}\|\Delta^{\om}_n f\|_{H^{p'}}^{p'} ,\quad f\in\H(\D).
    \end{equation}
Thus $A^p_\nu=\ell^p_{\frac{\alpha+1}{p}}(H^p,\{\Delta^{\om}_n\})$ and $A^{p'}_\sigma=\ell^{p'}_{\frac{1}{p'}-\frac{\alpha}{p}}(H^{p'},\{\Delta^{\om}_n\})$ with equivalent norms. 

In addition,   $\left(A^{p'}_\sigma\right)^\star$ can be identified with $[A^{p'}_\sigma,H^\infty]$ 
via the $H^2$-pairing with equivalence of norms by \cite[Proposition~1.3]{PavMixnormI}. That is, for each $L\in\left(A^{p'}_\sigma\right)^\star$ there exists a unique $g\in[A^{p'}_\sigma, H^\infty]$ such that $\|L\|=\|g\|_{[A^{p'}_\sigma,H^\infty]}$ and $L(f)=\langle f, g\rangle_{H^2}$ for all $f\in A^{p'}_\sigma$, and conversely, each functional $L(f)=\langle f,g\rangle_{H^2}$, induced by $g\in[A^{p'}_\sigma,H^\infty]$, belongs to $\left(A^{p'}_\sigma\right)^\star$ and satisfies $\|L\|=\| g\|_{[A^{p'}_\sigma,H^\infty]}$. Now 
use 
\cite[Theorem~5.4]{PavMixnormI} and the fact that $(H^{p'})^\star\simeq[H^{p'},H^\infty]=H^{p}$ via the $H^2$-pairing to deduce
    \begin{equation}\label{eq:dualap1}
    \left[\ell^{p'}_{\frac{1}{p'}-\frac{\alpha}{p}}(H^{p'},\{\Delta^{\om}_n\}),H^\infty\right]
    =\ell^{p}_{\frac{\alpha}{p}-\frac{1}{p'}}\left([H^{p'},H^\infty],\{\Delta^{\om}_n\}\right)
    =\ell^{p}_{\frac{\alpha}{p}-\frac{1}{p'}}\left(H^p,\{\Delta^{\om}_n\}\right)
    \end{equation}
with equivalent norms. Since $A^{p'}_\sigma=\ell^{p'}_{\frac{1}{p'}-\frac{\alpha}{p}}(H^{p'},\{\Delta^{\om}_n\})$ with equivalent norms,  it follows that
    \begin{equation}\label{eq:ap1}
    \left(A^{p'}_\sigma\right)^\star
    \simeq[A^{p'}_\sigma,H^\infty]=\ell^{p}_{\frac{\alpha}{p}-\frac{1}{p'}}\left(H^p,\{\Delta^{\om}_n\}\right),
    \end{equation}
via the $H^2$-pairing with equivalence of norms.

Finally, define the operator $I^\om:\H(\D)\to\H(\D)$ by $I^\om(g)(z)=\sum_{k=0}^\infty \widehat{g}(k)\om_{2k+1}z^{k}$ for all $z\in\D$.
The sequence $\{\om_{2k+1}\}_{k=0}^\infty$ is non-increasing,
and hence there exists a constant $C>0$ such that
    $$
    C^{-1}\om_{1+M_{n+1}}\|\Delta^{\om}_n g\|_{H^{p}}
    \le\|\Delta^{\om}_n I^\om(g)\|_{H^{p}}
    \le C\om_{1+M_n}\|\Delta^{\om}_n g\|_{H^{p}},\quad g\in\H(\D),
    $$
by \cite[Lemma~E]{PelRathg}. This combined with $(\om_1)_x\asymp\widehat{\om_1}\left(1-\frac1x\right)$, valid for all $1\le x<\infty$ by \cite[Lemma~2.1(vi)]{PelSum14}, gives $\|\Delta^{\om}_nI^\om(g)\|_{H^{p}}\asymp 2^{-n}\|\Delta^{\om}_n g\|_{H^{p}}$ for all $g\in\H(\D)$.
Therefore \eqref{18p} yields
    \begin{equation*}
    \begin{split}
    \| I^\om(g)\|^{p}_{\ell^{p}_{\frac{\alpha}{p}-\frac{1}{p'}}\left(H^p,\{\Delta^{\om}_n\}\right)}
    &=\sum_{n=0}^\infty  2^{-n\left(\alpha-\frac{p}{p'}\right)}\|\Delta^{\om}_n I^\om(g)\|^{p}_{H^{p}}
    \asymp \sum_{n=0}^\infty  2^{-n\left(\alpha+p-\frac{p}{p'}\right)}\|\Delta^{\om}_n g\|^{p}_{H^{p}}\\
    &=\sum_{n=0}^\infty  2^{-n(\alpha+1)}\|\Delta^{\om}_n g\|^{p}_{H^{p}}
    \asymp\| g\|^{p}_{A^{p}_\nu},\quad g\in\H(\D),
    \end{split}
    \end{equation*}
and hence $I^\om:A^{p}_\nu \to\ell^{p}_{\frac{\alpha}{p}-\frac{1}{p'}}\left([H^p,H^\infty],\{\Delta^{\om}_n\}\right)$ is isomorphic and onto. This converts $\left(A^{p'}_\sigma\right)^\star\simeq\ell^{p}_{\frac{\alpha}{p}-\frac{1}{p'}}\left(H^p,\{\Delta^{\om}_n\}\right)$ via the $H^2$-pairing, valid by \eqref{eq:ap1}, to $\left(A^{p'}_\sigma\right)^\star\simeq A^{p}_\nu$ via the $A^2_\om$-pairing, and thus finishes the proof of the first statement.

Now assume that $A_p(\om,\nu)<\infty$ and $\om\in\DDD$. Then a calculation shows that $\nu=\om\widehat{\om}^\alpha \in\DDD$, 
and $P^+_\om: L^p_\nu\to L^p_\nu$ is bounded by \cite[Theorem~13]{PelaezRattya2019}.
Reciprocally, if  $P^+_\om: L^p_\nu\to L^p_\nu$ is bounded, then $A_p(\om,\nu)<\infty$ and $\om\in\DDD$ by \cite[Theorem~13]{PelaezRattya2019}.
Alternatively, we can use Lemma~\ref{le:nec} and \cite[Proposition~20]{PelaezRattya2019} to get the conditions
$A_p(\om,\nu)<\infty$ and $\om\in\DDD$.

\end{Prf}

\section{Calder\'on and Stieltjes operators, and H\"ormander maximal function}\label{s:real}
 
We will prove the following reformulation of Theorem~\ref{th:calderon} using ideas coming from~\cite{DMRO}. 
 
\begin{theorem}\label{th:calderonref}
Let $1<p<\infty$ and let $\om$ be a radial weight. Let $\nu:[0,1)\to \mathbb{R}^+$ such that $\nu\in L^1_\om([0,1))$. Then the following statements are equivalent:
	\begin{enumerate}
	\item[\rm(i)] $H_\om+H_\om^\star :L^p_{\om\nu}\to L^p_{\om\nu}$ is bounded;
	\item[\rm(ii)] $S_{\om_1,\rad} :L^p_{\om\nu}\to L^p_{\om\nu}$ is bounded;
	\item[\rm(iii)] $M_{\om_1,\rad} :L^p_{\om\nu}\to L^p_{\om\nu}$ is bounded;
	\item[\rm(iv)] $ A_p(\om,\om\nu)<\infty$.	
	\end{enumerate}
Moreover,
	\begin{equation}\label{giguli}
	\|H_\om+H_\om^\star\|_{L^p_{\om\nu}\to L^p_{\om\nu}}
	\asymp\|S_{\om_1,\rad}\|_{L^p_{\om\nu}\to L^p_{\om\nu}}
	\asymp\|M_{\om_1,\rad}\|_{L^p_{\om\nu}\to L^p_{\om\nu}}
	\asymp A_p(\om,\om\nu).
	\end{equation}
\end{theorem}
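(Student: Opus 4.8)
The plan is to strip the problem down to one real dimension and then to recognize the four operators as the classical Hardy, adjoint Hardy, Stieltjes and Calder\'on operators in disguise. First I would observe that $H_\om$, $H_\om^\star$, $S_{\om_1,\rad}$ and $M_{\om_1,\rad}$ all act on each ray separately: writing $f_\theta(s)=f(se^{i\theta})$ and passing to polar coordinates, Fubini's theorem shows that the boundedness of any one of them on $L^p_{\om\nu}(\D)$ is equivalent to the corresponding one-dimensional weighted inequality on $[0,1)$ with respect to the measure $d\mu(s)=\om_1(s)\nu(s)\,ds$, where $\om_1(s)=s\om(s)$. In this reduced picture the operators are built solely from $\om_1$ and $\omgu$: the averaging operator $\omgu(r)^{-1}\int_r^1 g\,\om_1\,ds$, its adjoint $\int_0^r g\,\om_1/\omgu\,ds$, the Stieltjes-type operator, and the tail maximal function.

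Next I would record the pointwise estimates valid for $f\ge0$, namely $M_{\om_1,\rad}f\le 2S_{\om_1,\rad}f$ and $S_{\om_1,\rad}f\le H_\om f+H_\om^\star f\le 2S_{\om_1,\rad}f$. These follow by splitting the Stieltjes integral at $s=|z|$ and comparing $\omgu(s)+\omgu(|z|)$ with $\omgu(s)$ on $\{s<|z|\}$ and with $\omgu(|z|)$ on $\{s>|z|\}$, using only the monotonicity of $\omgu$. They yield (i)$\Rightarrow$(ii)$\Rightarrow$(iii) with comparable norms and reduce the whole theorem to the equivalence (iii)$\Leftrightarrow$(iv) together with the control of $H_\om^\star$ in (iv)$\Rightarrow$(i).

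The heart of the matter is the change of variables $t=\omgu(r)$, which decreases from $\om_1$ to $0$. Under it $\om_1\,ds=-dt$ and $\int_r^1\om_1=t$, so the reduced operators become the genuine Hardy operator $PG(t)=\frac1t\int_0^tG$, its adjoint $QG(t)=\int_t^{\om_1}G(\tau)/\tau\,d\tau$, the Stieltjes transform $\int_0^{\om_1}G(\tau)/(t+\tau)\,d\tau$, and the maximal Hardy operator $\sup_{\tau\ge t}\frac1\tau\int_0^\tau|G|$, all on $L^p(U\,dt)$ with $U=\nu\circ r(\cdot)$; simultaneously $A_p(\om,\om\nu)$ turns into the Muckenhoupt constant $\sup_t\left(\tfrac1t\int_0^t U\right)^{\frac1p}\left(\tfrac1t\int_0^t U^{1-p'}\right)^{\frac1{p'}}$. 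Then (iii)$\Rightarrow$(iv) is the standard test-function computation with $G=U^{1-p'}\chi_{(0,t_0)}$, and the converse weighted strong bound for the maximal Hardy operator is precisely Muckenhoupt's theorem \cite{Muckenhoupt1972}, which gives (iv)$\Rightarrow$(iii) and $\|M_{\om_1,\rad}\|\asymp A_p(\om,\om\nu)$.

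It remains to produce $H_\om^\star$ in (iv)$\Rightarrow$(i). The key point is that $P$ and $Q$ are $dt$-adjoints, so $\|Q\|_{L^p(U)}=\|P\|_{L^{p'}(U^{1-p'})}$, and that the $A_p$ condition is self-dual: replacing $(p,U)$ by $(p',U^{1-p'})$ leaves it invariant because $(U^{1-p'})^{1-p}=U$. Hence Muckenhoupt's theorem applied to $P$ on $L^{p'}(U^{1-p'})$ bounds $Q$ on $L^p(U)$ by $A_p(\om,\om\nu)$ as well, so $\|H_\om+H_\om^\star\|\le\|H_\om\|+\|H_\om^\star\|\lesssim A_p(\om,\om\nu)$, and the four norm estimates close the chain $A_p\le\|M_{\om_1,\rad}\|\le 2\|S_{\om_1,\rad}\|\le 2\|H_\om+H_\om^\star\|\lesssim A_p$. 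The main obstacle is that for a general $\om\in\DD$ the function $\omgu$ need not be strictly decreasing (it is constant wherever $\om$ vanishes), so $t=\omgu(r)$ is not a bona fide change of variables; this is exactly the place where I would instead argue directly, decomposing $[0,1)$ into the level sets of $M_{\om_1,\rad}f_\theta$ and summing against the $A_p$ condition in the spirit of \cite{DMRO}, or else circumvent the degeneracy by an approximation rendering $\omgu$ strictly monotone. The self-dual character of $A_p(\om,\om\nu)$ is what ultimately allows a single condition to govern both $H_\om$ and $H_\om^\star$, and hence the Calder\'on operator.
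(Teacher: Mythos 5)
Your overall architecture (reduction to rays, the pointwise chain $M_{\om_1,\rad}\le 2S_{\om_1,\rad}$ and $S_{\om_1,\rad}\le H_\om+H_\om^\star\le 2S_{\om_1,\rad}$, the test functions $\nu^{-p'/p}\chi_{[a,1)}$ for (iii)$\Rightarrow$(iv), and the self-duality $A_p(\om,\om\nu)=A_{p'}(\om,\om\nu^{-p'/p})$ to capture $H_\om^\star$) coincides with the paper's, and you correctly flag that $t=\omgu(r)$ fails to be a bona fide change of variables when $\om$ vanishes on intervals. The genuine gap lies elsewhere, at the central step: you attribute the strong bound $\|M_{\om_1,\rad}\|_{L^p_{\om\nu}\to L^p_{\om\nu}}\lesssim A_p(\om,\om\nu)$ to Muckenhoupt's theorem \cite{Muckenhoupt1972}. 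That theorem characterizes the Hardy averaging operator (i.e.\ $H_\om$) by the condition $M_p(\om,\nu)<\infty$ of \eqref{eq:Mpcondition}, not the maximal operator by the product condition. After your substitution, $A_p(\om,\om\nu)$ becomes $\sup_t\bigl(\tfrac1t\int_0^t U\bigr)^{1/p}\bigl(\tfrac1t\int_0^t U^{1-p'}\bigr)^{1/p'}$, which is the Calder\'on-weight condition of \cite{DMRO} and is \emph{strictly stronger} than Muckenhoupt's constant: the paper's own example $\om\equiv1$, $\nu(s)=(1-s^2)^{-1}\bigl(\log\tfrac{e}{1-s^2}\bigr)^{-\alpha}$, $\alpha>1$, has $M_p(\om,\nu)<\infty$ but $A_p(\om,\nu)=\infty$. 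So Muckenhoupt's theorem yields neither (iv)$\Rightarrow$(iii) nor the asserted equivalence $\|M_{\om_1,\rad}\|\asymp A_p(\om,\om\nu)$; and since the maximal operator dominates the averaging operator pointwise, there is no cheap domination in the needed direction. The missing estimate is exactly what the paper supplies: the level-set decomposition $E_k=\{2^k<M_{\om_1}f_\theta\le2^{k+1}\}=(b_k(\theta),b_{k+1}(\theta)]$ with \eqref{bk}--\eqref{bk2}, the $A_p$-based comparison of Lemma~\ref{le:apbk}, and the universal boundedness of $M_\eta$ on $L^p([0,1),\eta)$ of Lemma~\ref{le:Meta} (weak $(1,1)$ plus Marcinkiewicz). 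You mention this DMRO-style decomposition only as a fallback for the degeneracy of $\omgu$; in fact it is indispensable regardless of injectivity, because it, not Muckenhoupt's theorem, proves the maximal bound.

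Your plan could be repaired along a route genuinely different from the paper's: show quantitatively that $A_p(\om,\om\nu)$ dominates Muckenhoupt's constant for the data $(p,\om\nu)$ and, via the self-duality you already noted, for the dual data $(p',\om\nu^{-p'/p})$; two applications of \cite{Muckenhoupt1972} then bound $H_\om$ and $H_\om^\star$ separately, giving (iv)$\Rightarrow$(i) with $\|H_\om+H_\om^\star\|\lesssim A_p(\om,\om\nu)$, after which the cycle closes through the pointwise chain (i)$\Rightarrow$(ii)$\Rightarrow$(iii) and the test-function step (iii)$\Rightarrow$(iv), avoiding any maximal-function lemma. But the implication ``$A_p$ dominates $M_p$'' is itself a H\"older/integration computation that your sketch asserts rather than proves. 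The paper instead routes everything through the maximal estimate \eqref{pili}, using $\|H_\om\|\le\|M_{\om_1,\rad}\|\lesssim A_p(\om,\om\nu)$ and then the same bound applied to $(p',\nu^{-p'/p})$ for the adjoint; as written, your primary argument leaves the key inequality unproved.
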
   

Before embarking on the proof of Theorem~\ref{th:calderonref}, two auxiliary results are in order. For a radial weight $\eta$, denote 
	\begin{equation}\label{eq:M}
	M_\eta f(t)=\sup_{b<t}\frac{\int_b^1 |f(s)|\eta(s)\,ds}{\widehat{\eta}(b)},\quad0<t<1.
	\end{equation} 
By denoting $f_{\theta}(r)=f(re^{i\theta})$ for all $re^{i\theta}\in\D$, \eqref{eq:maximal} shows that $M_{\eta,\rad}(f)(z)= M_{\eta}f_{\theta}(r)$. Further, for a given measurable function $f$, $k\in\mathbb{Z}$ and a radial weight $\om$, define
	$$
	E_k=E_k(f_\theta,\om)=\{2^{k}< M_{\om_1}f_{\theta}(r)\le 2^{k+1}\},\quad 0\le \theta<2\pi.
	$$
For each $\t$, the function $M_{\om_1}f_{\theta}$ is nondecreasing, and hence $E_k=(b_k(\t), b_{k+1}(\t)]$ for some  $b_k(\t)< b_{k+1}(\t)$ and 
	\begin{equation}\label{bk}
	\frac{\int_{b_k(\t)}^1 f_{\t}(s)\om_1(s)\,ds}{\omgu(b_k(\t))}=2^k,\quad k\in\Z,\quad 0\le \theta<2\pi.
	\end{equation}
It follows from \eqref{bk} that
	\begin{equation}\label{bk1}
	\frac{\omgu(b_{k+1}(\t))}{\omgu(b_k(\t))}\le\frac12,\quad k\in\Z,\quad 0\le \theta<2\pi,
	\end{equation} 
and therefore
	$$
	\omgu(b_k(\t))
	=\int_{b_k(\t)}^{b_{k+1}(\t)} \om_1(s)\,ds+ \omgu(b_{k+1}(\t))
	\le \int_{b_k(\t)}^{b_{k+1}(\t)} \om_1(s)\,ds + \frac12\omgu(b_{k}(\t)).
	$$
Thus
	\begin{equation}\label{bk2}
	\omgu(b_{k}(\t))
	\le 2\int_{b_k(\t)}^{b_{k+1}(\t)}
	\om_1(s)\,ds,\quad k\in\Z,\quad 0\le \theta<2\pi.
	\end{equation} 

\begin{lemma}\label{le:apbk}
Let $1<p<\infty$ and let $\om$ be a radial weight. Let $\nu:[0,1)\to \mathbb{R}^+$ such that $A_p(\om,\om\nu)<\infty$. Then
there exists a constant $C=C(\om,\nu,p)>0$ such that
	$$
	\int_{b_k(\t)}^{1}\nu(s)^{-\frac{p'}{p}}\om(s)s\,ds
	\le C\int_{b_k(\t)}^{b_{k+1}(\t)}\nu(s)^{-\frac{p'}{p}}\om(s)s\,ds,\quad 0\le \theta<2\pi.
	$$ 
\end{lemma}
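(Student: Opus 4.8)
The plan is first to notice that the integrand $\nu(s)^{-p'/p}\om(s)s$ is precisely $\sigma_1(s)$, the weight attached to the pair $(\om,\om\nu)$ through \eqref{eq:apcondition}: writing $\sgu(r)=\int_r^1\sigma_1(s)\,ds=\int_r^1 s\om(s)\nu(s)^{-p'/p}\,ds$, the assertion reads $\sgu(b_k(\t))\le C\int_{b_k(\t)}^{b_{k+1}(\t)}\sigma_1$. Thus the lemma is a reverse-doubling statement for $\sigma_1$ along the level-set endpoints $b_k(\t)$: the whole tail mass of $\sigma_1$ beyond $b_k(\t)$ is, up to a constant, already contained in the single block $[b_k(\t),b_{k+1}(\t)]$. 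The strategy is to transport the analogous property \eqref{bk2} of $\om_1$ over to $\sigma_1$ by means of a Hölder factorization together with the hypothesis $A_p(\om,\om\nu)<\infty$.

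Concretely, I would proceed in three short steps. \emph{Step 1.} Factorize $s\om=(s\om\nu)^{1/p}\bigl(s\om\nu^{-p'/p}\bigr)^{1/p'}$ and apply Hölder's inequality on the single block $[b_k(\t),b_{k+1}(\t)]$, enlarging the first resulting factor to the whole tail $[b_k(\t),1)$, to obtain
\[
\int_{b_k(\t)}^{b_{k+1}(\t)}\om_1(s)\,ds\le \widehat{(\om\nu)_1}(b_k(\t))^{\frac1p}\left(\int_{b_k(\t)}^{b_{k+1}(\t)}\sigma_1(s)\,ds\right)^{\frac1{p'}}.
\]
\emph{Step 2.} Bound the left-hand side from below by $\tfrac12\omgu(b_k(\t))$ using \eqref{bk2}. \emph{Step 3.} Apply the hypothesis at the point $r=b_k(\t)$, namely $\widehat{(\om\nu)_1}(b_k(\t))^{1/p}\sgu(b_k(\t))^{1/p'}\le A_p(\om,\om\nu)\,\omgu(b_k(\t))$, to replace the factor $\widehat{(\om\nu)_1}(b_k(\t))^{1/p}$. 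After cancelling the common factor $\omgu(b_k(\t))$ one is left with $\sgu(b_k(\t))^{1/p'}\le 2A_p(\om,\om\nu)\bigl(\int_{b_k(\t)}^{b_{k+1}(\t)}\sigma_1\bigr)^{1/p'}$, which is the claim with $C=\bigl(2A_p(\om,\om\nu)\bigr)^{p'}$, a constant uniform in $k$ and $\t$.

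The point I expect to be the genuine obstacle is choosing the right mechanism rather than the computation itself. The tempting route — iterating the geometric decay \eqref{bk1} of $\omgu$ to force geometric decay of $\sgu(b_k)$ — stalls, because $A_p$ controls only the \emph{product} $\widehat{(\om\nu)_1}^{1/p}\sgu^{1/p'}$, so in principle one factor could stagnate while the other carries all the decay. The efficient device is instead to run Hölder on one block and let the $A_p$ condition convert $\widehat{(\om\nu)_1}$ back into $\sgu$ in a single stroke. The only bookkeeping needed is that the cancellation is legitimate: $\omgu(b_k(\t))$ is finite and positive since $\om\in L^1$ and $b_k(\t)<1$, while $\sgu(b_k(\t))<\infty$ follows from $A_p(\om,\om\nu)<\infty$ together with $\widehat{(\om\nu)_1}(b_k(\t))>0$; under the standing non-degeneracy of the weights these are automatic.
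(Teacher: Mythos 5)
Your proof is correct and takes essentially the same route as the paper's: the paper combines exactly your three ingredients --- the condition $A_p(\om,\om\nu)<\infty$ evaluated at $r=b_k(\t)$, the estimate \eqref{bk2}, and H\"older's inequality with the factorization $s\om(s)=\bigl(s\om(s)\nu(s)\bigr)^{1/p}\bigl(s\om(s)\nu(s)^{-p'/p}\bigr)^{1/p'}$ on the block $[b_k(\t),b_{k+1}(\t)]$ --- into a single chain of inequalities, cancelling the common factor $\left(\int_{b_k(\t)}^1 s\om(s)\nu(s)\,ds\right)^{1/p}$ instead of $\omgu(b_k(\t))$, which is just a rearrangement of your Steps 1--3. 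Your closing bookkeeping on the legitimacy of the cancellation is sound and is left implicit in the paper.
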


\begin{proof}
The definition of $A_p(\om,\om\nu)$, \eqref{bk2} and H\"older's inequality yield
	\begin{equation*}
	\begin{split}
	&\left(\int_{b_k(\t)}^1 \nu(s)\om(s)s\,ds \right)^{\frac{1}{p}}
	\left(\int_{b_k(\t)}^1 \nu(s)^{-\frac{p'}{p}}\om(s)s\,ds \right)^{\frac{1}{p'}}
	\\ & \qquad\lesssim\omgu(b_k(\t))
	\lesssim\int_{b_k(\t)}^{b_{k+1}(\t)}\om_1(s)\,ds
	\\
	&\qquad\le\left(\int_{b_k(\t)}^{b_{k+1}(\t)}\nu(s)\om(s)s\,ds \right)^{\frac{1}{p}}
	\left(\int_{b_k(\t)}^{b_{k+1}(\t)} \nu(s)^{-\frac{p'}{p}}\om(s)s\,ds \right)^{\frac{1}{p'}}\\
	&\qquad\le\left(\int_{b_k(\t)}^{1}\nu(s)\om(s)s\,ds \right)^{\frac{1}{p}}
	\left(\int_{b_k(\t)}^{b_{k+1}(\t)} \nu(s)^{-\frac{p'}{p}}\om(s)s\,ds \right)^{\frac{1}{p'}},
	\end{split}
	\end{equation*}
and the assertion follows.
\end{proof}

\begin{lemma}\label{le:Meta}
Let $1<p<\infty$ and let $\eta$ be radial weight. Then $M_\eta: L^1([0,1),\eta)\to L^{1,\infty}([0,1),\eta)$ is bounded with norm at most one. In particular, for each $1<p<\infty$, $M_\eta:L^p([0,1),\eta)\to L^{p}([0,1),\eta)$ is bounded
with norm depending only on $p$.
\end{lemma}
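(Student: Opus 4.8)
The plan is to prove the endpoint weak-type $(1,1)$ inequality with sharp constant one and then obtain all the strong $(p,p)$ bounds by interpolating it against the trivial $L^\infty$ estimate. The whole argument is the classical weak-type proof for a Hardy--Littlewood-type maximal operator, made transparent by the monotonicity of $M_\eta f$.

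First I would fix $\lambda>0$ and $f\in L^1([0,1),\eta)$ and write $\phi(b)=\int_b^1|f(s)|\eta(s)\,ds$, so that $M_\eta f(t)=\sup_{b<t}\phi(b)/\widehat{\eta}(b)$. Since this is a supremum over the increasing family of admissible $b$, the function $t\mapsto M_\eta f(t)$ is nondecreasing; hence the level set $\{t\in[0,1):M_\eta f(t)>\lambda\}$ is an interval of the form $(a_\lambda,1)$, whose single endpoint carries no $\eta$-mass. A short comparison shows that $a_\lambda=\inf\{b\in[0,1):\phi(b)>\lambda\widehat{\eta}(b)\}$. The $\eta$-measure of the level set is precisely $\widehat{\eta}(a_\lambda)$, so it suffices to prove $\lambda\widehat{\eta}(a_\lambda)\le\|f\|_{L^1([0,1),\eta)}$. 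Both $\phi$ and $\widehat{\eta}$ are continuous on $[0,1)$ because $|f|\eta,\eta\in L^1$; choosing $b_j\downarrow a_\lambda$ with $\phi(b_j)>\lambda\widehat{\eta}(b_j)$ and passing to the limit by continuity yields $\phi(a_\lambda)\ge\lambda\widehat{\eta}(a_\lambda)$, i.e. $\lambda\widehat{\eta}(a_\lambda)\le\int_{a_\lambda}^1|f|\eta\le\|f\|_{L^1([0,1),\eta)}$. This is exactly the weak type bound with norm at most one.

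Next I would record that $M_\eta:L^\infty([0,1),\eta)\to L^\infty([0,1),\eta)$ has norm at most one, since each average $\phi(b)/\widehat{\eta}(b)$ is bounded by $\|f\|_{L^\infty}$. As $M_\eta$ is sublinear (the estimate $\int_b^1|f+g|\eta\le\int_b^1|f|\eta+\int_b^1|g|\eta$ gives $M_\eta(f+g)\le M_\eta f+M_\eta g$ pointwise), the Marcinkiewicz interpolation theorem on the finite measure space $([0,1),\eta\,ds)$ applies and delivers $M_\eta:L^p([0,1),\eta)\to L^p([0,1),\eta)$ for every $1<p<\infty$, with norm depending only on $p$. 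This completes the two assertions.

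I do not expect a genuinely hard step here; after the substitution $w=\widehat{\eta}(s)$ the operator $M_\eta$ is an isometric copy of a one-sided Hardy--Littlewood maximal operator on a Lebesgue interval, which already makes the conclusion plausible. The only point demanding care is the treatment of the defining supremum at the left endpoint $a_\lambda$: the supremum in $M_\eta$ need not be attained, and since $\eta$ may vanish on subintervals the function $\widehat{\eta}$ need not be strictly decreasing, so one cannot simply select a single optimal $b$ and must instead argue through the continuity of $\widehat{\eta}$ and the limiting procedure described above.
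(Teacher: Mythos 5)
Your proof is correct and follows essentially the same route as the paper: the weak-type $(1,1)$ bound obtained from the monotonicity of $M_\eta f$, which makes the level set an interval $(a_\lambda,1)$ of $\eta$-measure $\widehat{\eta}(a_\lambda)$, followed by the trivial $L^\infty$ bound and the Marcinkiewicz interpolation theorem. Your limiting argument yielding $\phi(a_\lambda)\ge\lambda\widehat{\eta}(a_\lambda)$ is in fact slightly more careful than the paper's, which simply asserts the equality $\int_b^1|f(s)|\eta(s)\,ds=\lambda\widehat{\eta}(b)$ at the endpoint.
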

 
\begin{proof}
Since $M_\eta f$ is an increasing function, for each $\lambda>0$ and $f\in L^1([0,1),\eta)$, the level set $\{ t\in(0,1):\,M_\eta f(t)>\lambda\}$ coincides with the interval $(b,1)$ for some $b=b(f,\eta,\lambda)\in(0,1)$ such that $\int_b^1|f(s)|\eta(s)\,ds=\widehat{\eta}(b)\lambda$. Therefore 
	$$
	\int_{\{s\in(0,1):\,M_\eta f(s)>\lambda\}}\eta(s)\,ds
	=\widehat{\eta}(b)
	=\frac{\int_b^1|f(s)|\eta(s)\,ds}{\lambda}
	\le\frac{\int_0^1|f(t)|\eta(s)\,ds}{\lambda},
	$$
and thus $M_\eta:L^1([0,1),\eta)\to L^{1,\infty}([0,1),\eta)$ is bounded with norm at most one. 

Now that $M_\eta$ is clearly bounded from $L^\infty([0,1),\eta)$ into itself with norm at most one, the statement for $1<p<\infty$ follows from the Marcinkiewicz interpolation theorem. 
\end{proof}

\begin{Prf}{\em{Theorem~\ref{th:calderonref}.}}
We begin with showing that (iii) and (iv) are equivalent. Assume first (iv), that is, $A_p(\om,\om\nu)<\infty$. Then, for each $0\le\t<2\pi$,
by the definition of $E_k$, \eqref{bk} and the hypothesis (iv), we have
	\begin{equation*}
	\begin{split}
	I(f_\theta)
	&=\int_{0}^1\left( M_{\om_1}f_{\t}(r)\right)^p \nu(r)\om_1(r)\,dr
	=\sum_{k\in\mathbb{Z}}\int_{E_k} \left( M_{\om_1}f_{\t}(r)\right)^p\nu(r)\om_1(r)\,dr\\
	&\le2^p\sum_{k\in\mathbb{Z}}2^{kp}\int_{b_k(\theta)}^{b_{k+1}(\theta)}\nu(r)\om_1(r)\,dr\\
	&=2^p\sum_{k\in\mathbb{Z}} \frac{\left(\int_{b_k(\t)}^1 f_{\t}(r)\om_1(r)\,dr\right)^p
	\int_{b_k(\theta)}^{b_{k+1}(\theta)}\nu(r)\om_1(r)\,dr}{\omgu(b_k(\theta))^p}\\
	&\le 2^p\sum_{k\in\mathbb{Z}}\frac{\left(\int_{b_k(\t)}^1 f_{\t}(r)\om_1(r)\,dr\right)^p 
	\int_{b_k(\theta)}^{1}\nu(r)\om_1(r)\,dr}{\omgu(b_k(\theta))^p}\\
	&\le2^pA^p_p(\om,\om\nu)\sum_{k\in\mathbb{Z}}\left(\frac{\int_{b_k(\t)}^1 f_{\t}(r)\om_1(r)\,dr}
	{\int_{b_k(\t)}^{1}\nu^{-\frac{p'}{p}}(r)\om_1(r)\,dr}\right)^p
	\int_{b_k(\t)}^{1}\nu^{-\frac{p'}{p}}(r)\om_1(r)\,dr.
	\end{split}
	\end{equation*}
By applying now Lemmas~\ref{le:apbk} and~\ref{le:Meta}, we deduce
	\begin{equation*}
	\begin{split}
	I(f_\theta)
	&\lesssim A^p_p(\om,\om\nu)\sum_{k\in\mathbb{Z}}
	\left(\frac{\int_{b_k(\t)}^1 f_{\t}(r)\om_1(r)\,dr}{\int_{b_k(\t)}^{1}\nu^{-\frac{p'}{p}}(r)\om_1(r)\,dr}\right)^p
	\int_{b_k(\t)}^{b_{k+1}(\t)}\nu^{-\frac{p'}{p}}(r)\om_1(r)\,dr\\
	&\le A^p_p(\om,\om\nu)\int_{0}^{1}\left(M_{\nu^{-\frac{p'}{p}}\om_1}(f_{\t}\nu^{\frac{p'}{p}})(r)\right)^p\nu^{-\frac{p'}{p}}(r)\om_1(r)\,dr\\
	&\lesssim A^p_p(\om,\om\nu)\int_{0}^{1}\left(|f_{\t}(r)|\nu^{\frac{p'}{p}}(r)\right)^p\nu^{-\frac{p'}{p}}(r)\om_1(r) \,dr\\
	&=A^p_p(\om,\om\nu)\int_{0}^{1}|f_{\t}(r)|^p\nu(r)\om_1(r)\,dr,\quad 0\le \t<2\pi.
	\end{split}
	\end{equation*}
Since
	\begin{equation*}
	\begin{split}
	\int_{\D}\left(M_{\om_1,\rad}(f)(z)\right)^p\nu(z)\om(z)\,dA(z)
	=\frac1{\pi}\int_{0}^{2\pi}I(f_\theta)\,d\theta,
	\end{split}
	\end{equation*}
this shows that $M_{\om_1,\rad} :L^p_{\om\nu}\to L^p_{\om\nu}$ is bounded with
	\begin{equation}\label{pili}
	\|M_{\om_1,\rad}\|_{L^p_{\om\nu}\to L^p_{\om\nu}}\lesssim A_p(\om,\om\nu).
	\end{equation}

Conversely, assume that (iii) holds, that is, $M_{\om_1,\rad} :L^p_{\om\nu}\to L^p_{\om\nu}$ is bounded. Then it follows that	
	$$
	\int_{0}^1\left(M_{\om_1}f(r)\right)^p\nu(r)\om_1(r)\,dr
	\le\|M_{\om_1,\rad}\|_{L^p_{\om\nu}\to L^p_{\om\nu}}^p\int_0^1f(r)^p\om_1(r)\nu(r)\,dr
	$$
for all non-negative radial functions $f$. By choosing $f(r)=\nu^{-\frac{p'}{p}}(r)\chi_{[a,1)}(r)$, where $a\in [0,1)$, and using standard arguments, we obtain 
	\begin{equation}\label{pili2}
	A_p(\om,\om\nu)\le\|M_{\om_1,\rad}\|_{L^p_{\om\nu}\to L^p_{\om\nu}}.
	\end{equation}

The pointwise inequality 
	\begin{equation*}
	\begin{split}
	S_{\om_1,\rad}(f)(z)
	&\le\frac{ \int_{|z|}^1f\left(s\frac{z}{|z|}\right) s\om(s)\,ds}{\omgu(z)}+\int_{0}^{|z|} f\left(s\frac{z}{|z|}\right)\frac{\om_1(s)}{\omgu(s)}\,ds\\
	&=H_\om(f)(z)+H^\star_\om(f)(z),\quad z\in\D,\quad f\ge 0,
	\end{split}
	\end{equation*} 
yields 
	\begin{equation}\label{eq:CdominaS}
	\begin{split}
	\|S_{\om_1,\rad}\|_{L^p_{\om\nu}\to L^p_{\om\nu}}\le\|H_\om+H^\star_\om\|_{L^p_{\om\nu}\to L^p_{\om\nu}},
	\end{split}
	\end{equation}
and thus (i) implies (ii).

If $b<|z|$, then we have 
	$$ 
	\frac{\int_{b}^1f\left(s\frac{z}{|z|}\right)\om(s)s\,ds}{\omgu(b)}
	\le2\int_{b}^1 \frac{f\left(s\frac{z}{|z|}\right)}{\omgu(b)+\omgu(z)}\om(s)s\,ds
	\le2S_{\om_1,\rad}(f)(z),\quad z\in\D,\quad f\ge 0,
	$$
which implies $M_{\om_1,\rad}(f)\le 2S_{\om_1,\rad}(f)$ for all $f\ge 0$. Therefore
	\begin{equation}\label{eq:CdominaS2}
	\|M_{\om_1,\rad}\|_{L^p_{\om\nu}\to L^p_{\om\nu}}\le\|S_{\om_1,\rad}\|_{L^p_{\om\nu}\to L^p_{\om\nu}},
	\end{equation}
and thus (ii) implies (iii). 

To complete the proof, it remains to show that (iv) implies (i) and 
$$\|H_\om+H_\om^\star\|_{L^p_{\om\nu}\to L^p_{\om\nu}}\lesssim A_p(\om,\om\nu).$$ Assume (iv), that is, $A_p(\om,\om\nu)<\infty$. Then $\om\nu^{-\frac{p'}{p}}$ is a weight, and therefore $(L^p_{\om\nu})^\star\simeq L^{p'}_{\om\nu^{-\frac{p'}{p}}}$ and $(L^{p'}_{\om\nu^{-\frac{p'}{p}}})^\star\simeq L^{p}_{\om\nu}$ via the $L^2_\om$-pairing. Therefore each linear operator $T$ is bounded from $L^{p}_{\om\nu}$ into itself if and only if its adjoint $T^\star$, defined by the identity $\langle Tf,g\rangle_{L^2_\om}=\langle f, T^\star g\rangle_{L^2_\om}$, is bounded from $L^{p'}_{\om\nu^{-\frac{p'}{p}}}$ into itself, and the operator norms are comparable. Since $A_p(\om,\om\nu)<\infty$ by the assumption, \eqref{pili} yields $\|H_{\om_1,\rad}\|_{L^p_{\om\nu}\to L^p_{\om\nu}}\lesssim A_p(\om,\om\nu)<\infty$. Moreover, $A_p(\om,\om\nu)=A_{p'}(\om,\om\nu^{-\frac{p'}{p}})$, and hence \eqref{pili} shows that $H_{\om_1,\rad} :L^{p'}_{\om\nu^{-\frac{p'}{p}}}\to L^p_{\om\nu^{-\frac{p'}{p}}}$ is bounded and the operator norm is bounded by a constant times $A_p(\om,\om\nu)$. It follows that $H^\star_{\om_1,\rad}:L^p_{\om\nu}\to L^p_{\om\nu}$ is bounded, and $\|H^\star_{\om_1,\rad}\|_{L^p_{\om\nu}\to L^p_{\om\nu}}\lesssim A_p(\om,\om\nu)$. Therefore 
	\begin{equation}\label{kusi}
	\|H_\om+H_\om^\star\|_{L^p_{\om\nu}\to L^p_{\om\nu}}\lesssim A_p(\om,\om\nu), 
	\end{equation}
and thus (i) is satisfied. The estimates \eqref{giguli} for the operator norms follows from the inequalities \eqref{pili}--\eqref{kusi}.
\end{Prf}

\end{document}